\definecolor{darkgreen}{rgb}{0,0.45,0}
\ifpdf  \usepackage[pdftex,colorlinks,urlcolor=blue,citecolor=darkgreen,linkcolor=darkgreen,linktocpage]{hyperref}
\else   \usepackage[hypertex,colorlinks,urlcolor=blue,citecolor=darkgreen,linkcolor=darkgreen,linktocpage]{hyperref}
\def\subsection{\@startsection{subsection}{2}%
  \z@{.5\linespacing\@plus.7\linespacing}{.5\linespacing}%
  {\normalfont\bfseries}}
\newtheorem{de}{Definition}[section]
\newtheorem{lem}[de]{Lemma}
\newtheorem{prop}[de]{Proposition}
\newtheorem{cor}[de]{Corollary}
\newtheorem{thm}[de]{Theorem}
\theoremstyle{remark}
\newtheorem{rem}[de]{Remark}
\newtheorem{ex}[de]{Example}
\newcommand{\dfn}[1]{\textbf{\boldmath{#1}}}
\newcommand{\ra}{\to}
\newcommand{\lra}       {\longrightarrow}
\newcommand{\llra}[1]   {\stackrel{#1}{\lra}}  
\newcommand{\N}{\ensuremath{\mathbb{N}}}
\newcommand{\Z}{\ensuremath{\mathbb{Z}}}
\newcommand{\R}{\ensuremath{\mathbb{R}}}
\newcommand{\cSD}{\mathcal{SD}}
\newcommand{\cSV}{\mathcal{SV}}
\newcommand{\cDV}{\mathcal{DV}}
\newcommand{\cF}{\mathcal{FFV}}
\newcommand{\fine}{\mathcal{FV}}
\newcommand{\proj}{\mathcal{PV}}
\newcommand{\haus}{\mathcal{HT}}
\newcommand{\DVect}{\mathrm{DVect}}
\DeclareMathOperator{\im}{Im}
\newcommand{\supp}{\text{supp}}
\newcommand{\eps}{\epsilon}
\title{Diffeological Vector Spaces}
\author{J. Daniel Christensen}
\address{Department of Mathematics,
         University of Western Ontario,
         London, Ontario,
         Canada}
\email{jdc@uwo.ca}
\author{Enxin Wu}
\address{Department of Mathematics,
         Shantou University,
         Guangdong, P.R. China}
\email{exwu@stu.edu.cn}
\thanks{The second author was partially supported by NNSF of China (No.\ 112530) and 
STU Scientific Research Foundation for Talents (No.\ 760179).}
\date{}
\begin{document}

\begin{abstract}
We study the relationship between many natural conditions that one can put 
on a diffeological vector space: being fine or projective,
having enough smooth (or smooth linear) functionals to separate points,
having a diffeology determined by the smooth linear functionals,
having fine finite-dimensional subspaces,
and having a Hausdorff underlying topology.
Our main result is that the majority of the conditions fit into a total order.
We also give many examples in order to show which implications do not hold,
and use our results to study the homological algebra of diffeological vector spaces.
\end{abstract}

\subjclass[2010]{46S99 (Primary), 57P99 (Secondary)}

\keywords{Diffeological vector space, homological algebra,
fine diffeology, projective diffeological vector space, smooth linear functionals.}

\maketitle

\tableofcontents

\section{Introduction}

Diffeological spaces are elegant generalizations of manifolds that include
a variety of singular spaces and infinite-dimensional spaces.
Many vector spaces that arise in applications are naturally equipped with
a compatible structure of a diffeological space.
Examples include $C^{\infty}(M, \R^n)$ for a manifold (or even a diffeological space) $M$,
spaces of smooth or holomorphic sections of vector bundles,
tangent spaces of diffeological spaces (as defined in~\cite{CW}),
smooth linear duals of all of these spaces, etc.
Such objects are called diffeological vector spaces and are the topic of this paper.

Diffeological vector spaces have been studied by Iglesias-Zemmour in~\cite{I1,I2}.
He used them to define diffeological manifolds, and developed the theory of
\emph{fine} diffeological vector spaces, a particularly
well-behaved kind that forms the beginning of our story.
In~\cite{KM}, Kriegl and Michor studied topological vector spaces equipped with
a smooth structure, and their examples can be regarded as diffeological vector spaces.
Diffeological vector spaces were used in the study of tangent spaces of diffeological
spaces in~\cite{V} and~\cite{CW}.
Wu investigated the homological algebra of \emph{all} diffeological vector spaces~\cite{Wu},
and the present paper builds heavily on this foundation.

In this paper, we study some natural conditions that one can put on a 
diffeological vector space, and show that the majority of them fit into
a total order.
In order to state our results, we briefly introduce the conditions here,
making use of some background material summarized in Section~\ref{se:background}.

Any vector space has a smallest diffeology making it into a diffeological
vector space.  This is called the \dfn{fine} diffeology, and we write $\fine$
for the collection of vector spaces with the fine diffeology.
We write $\cF$ for the collection of diffeological vector spaces whose
finite-dimensional subspaces (with the induced diffeology) are all fine.

A diffeological vector space $V$ is \dfn{projective} if for every linear
subduction $f : W_1 \to W_2$ and every smooth linear map $g : V \to W_2$,
there exists a smooth linear map $h : V \to W_1$ such that $g = f \circ h$.
We write $\proj$ for the collection of projective diffeological vector spaces.

A diffeological vector space $V$ is in $\cSD$ (resp. $\cSV$)
if the smooth (resp. smooth linear) functionals $V \to \R$ separate points of $V$.
That is, for each $x$ and $y$ in $V$ with $x \neq y$, such a functional $f$ can be found so that
$f(x) \neq f(y)$.

Each diffeological space has a natural topology called the $D$-topology.
We write $\haus$ for the collection of diffeological vector spaces whose
$D$-topologies are Hausdorff.

The last letter of the abbreviation is $\mathcal{V}$, $\mathcal{D}$ or $\mathcal{T}$
depending on whether the condition depends on the structure as a diffeological vector space,
a diffeological space, or a topological space.

We now state the main results of the paper.

\begin{thm}\label{thm:main1}
We have the following chain of containments:
\[
  \fine \subset \proj \subset \cSV \subseteq \cSD \subset \cF
  \quad\text{and}\quad
  \cSD \subset \haus ,
\]
where $\subset$ indicates proper containment.  
Neither of $\haus$ and $\cF$ contains the other.
\end{thm}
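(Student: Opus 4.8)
The plan is to split the statement into three parts --- the chain of containments, the properness of the marked inclusions, and the incomparability of $\haus$ and $\cF$ --- proving the containments as lemmas and disposing of the rest with four examples.

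Two containments are essentially formal. For $\cSV\subseteq\cSD$ there is nothing to prove, a smooth linear functional being a smooth functional. For $\cSD\subseteq\haus$, smooth maps are continuous for the $D$-topologies and the $D$-topology of $\R$ is the usual Hausdorff one, so if $f\colon V\to\R$ is smooth with $f(x)\neq f(y)$ then the $f$-preimages of two disjoint open rays separating $f(x)$ and $f(y)$ are disjoint $D$-open sets separating $x$ and $y$. For $\fine\subseteq\proj$, given a linear subduction $p\colon W_1\to W_2$ and a smooth linear $g\colon V\to W_2$ with $V$ fine, pick a basis of $V$, lift each $g(\text{basis vector})$ along the surjection $p$, and extend linearly; the result is smooth because a plot of a fine space is locally a finite linear combination of constant vectors with smooth coefficients, which the map carries to a plot of $W_1$. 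For $\proj\subseteq\cSV$, given $V$ set $P_V=\bigoplus_q F(U_q)$, where $q$ ranges over the plots of $V$, $U_q$ is the (open, Euclidean) domain of $q$, and $F$ is the free diffeological vector space functor; the obvious map $P_V\to V$ (extending each $q$ linearly off its summand) is a linear subduction, since $q$ lifts tautologically through $F(U_q)$, and since smooth bump functions on $U_q$ realize arbitrary values on finite subsets, each $F(U_q)$, and hence $P_V$, lies in $\cSV$. If $V$ is projective, the lifting property applied to $P_V\to V$ and $\mathrm{id}_V$ presents $V$ as a smooth linear retract of $P_V$, and $\cSV$ is closed under smooth linear retracts (a separating family on $P_V$ pulls back along the injection $V\hookrightarrow P_V$). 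For $\cSD\subseteq\cF$: if $W$ is a finite-dimensional subspace of $V\in\cSD$, restriction gives $W\in\cSD$, so it suffices to show a finite-dimensional $W$ in $\cSD$ is fine. If not, $W$ has a plot that is not smooth in the usual sense, hence (by Boman's theorem) a non-smooth curve $\gamma$; closing $\gamma$ up under the vector-space operations and smooth reparametrizations (translates $\gamma+v$, sums $\gamma\circ h_1+\gamma\circ h_2$, and so on) produces enough bad plots to force every smooth function on $W$ to have vanishing directional derivatives in a fixed direction, hence to be constant along it, contradicting separation of points.

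For properness and incomparability I would exhibit four examples. (1) $F(\R)$ is projective: lift a smooth map $\R\to W_2$ locally along a linear subduction $W_1\to W_2$, glue the lifts by a smooth partition of unity on $\R$, and use linearity of the subduction to see the glued map is still a lift; but $F(\R)$ is not fine, since the plot sending $t$ to the generator indexed by $t$ is not locally a finite combination of constants. Hence $\fine\subsetneq\proj$. (2) $\R^{\N}$ lies in $\cSV$, as finite linear combinations of the coordinate projections separate points, but it is not projective, which I would show by exhibiting a linear subduction onto $\R^{\N}$ admitting no smooth linear splitting. Hence $\proj\subsetneq\cSV$. (3) Give $\R$ the vector-space diffeology generated by its standard diffeology and the single plot $t\mapsto|t|$: every plot is then continuous, so the $D$-topology is the usual Hausdorff one, while for each $a$ the plot $t\mapsto|t|+|t-a|$ is affine on one side of $a$ and constant on the other, forcing every smooth function to be constant on each component of $\R\setminus\{0\}$, hence constant. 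So this space is in $\haus$ but not in $\cSD$ and not in $\cF$, giving $\cSD\subsetneq\haus$ and $\haus\not\subseteq\cF$. (4) Let $T=\R/(\Z+\alpha\Z)$, $\alpha$ irrational, be the irrational torus --- it has indiscrete $D$-topology and only constant smooth real functions --- and take $F(T)$. Every finite-dimensional subspace of $F(T)$ lies in the span of finitely many generators $[z]$, which is a finite-dimensional fine space because finite subsets of $T$ carry the discrete diffeology; so $F(T)\in\cF$. But $z\mapsto[z]$ is a smooth injection $T\to F(T)$, so every smooth function on $F(T)$ is constant on its image (hence $F(T)\notin\cSD$), and a $D$-open subset of $F(T)$ pulls back along this map to a $D$-open, hence trivial, subset of $T$, so two such points are not $D$-separated (hence $F(T)\notin\haus$). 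This gives $\cSD\subsetneq\cF$ and $\cF\not\subseteq\haus$.

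The main obstacle I expect is $\cSD\subseteq\cF$: converting one non-smooth plot into a genuine failure of point separation needs a careful local-to-global argument about which functions remain smooth once the diffeology is closed under the vector-space operations. Among the examples, the fiddly points are the partition-of-unity gluing for the projectivity of $F(\R)$ (checking the glued map lands in $F(\R)$ and remains a lift) and the claim that finite-dimensional subspaces of $F(T)$ are fine, which rests on the arithmetic of the irrational torus.
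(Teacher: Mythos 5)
Your formal containments and your examples for the first and third properness claims are essentially sound: the lifting argument for $\fine\subseteq\proj$, the tautological subduction $\bigoplus_q F(U_q)\to V$ for $\proj\subseteq\cSV$, the partition-of-unity proof that $F(\R)$ is projective, and the space $\R$ with the vector-space diffeology generated by $t\mapsto|t|$ (a correct substitute for the paper's $C(\R)$, Example~\ref{ex:H=/=>SD}, for $\cSD\subsetneq\haus$ and $\haus\not\subseteq\cF$) all work. But the heart of the theorem, $\cSD\subseteq\cF$, is only gestured at, and your sketch would not succeed as stated. Saying that closing a non-smooth curve up under the vector-space operations ``forces every smooth function to have vanishing directional derivatives in a fixed direction'' gives no reason why one \emph{fixed} direction works for all smooth functions simultaneously; in the paper (Theorem~\ref{thm:SD=>FDF}, Case~1) this comes from showing that the set of all gradients $\nabla f(x)$, $f\in C^\infty(V,\R)$, is a proper linear subspace of $\R^n$, via an inverse-function-theorem argument, and that argument needs the bad curve to be \emph{continuous} at the point. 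When it is discontinuous the strategy is different and no fixed direction is produced: one analyzes boundedness of $t^kp(t)$ and uses an extended special curve lemma (Lemma~\ref{le:escl}) to rescale $p$ into a plot $q$ with $q(0)=0$ and $q(t_i)\to v\neq 0$, so that every smooth functional takes the same value at $0$ and $v$. Without some such case analysis your argument has a genuine hole. (Similarly, the properness of $\proj\subset\cSV$ is only promised: you assert a linear subduction onto $\prod_\omega\R$ with no smooth linear splitting without producing one; the paper leans on Wu's example here.)

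The second gap is your example for $\cSD\subsetneq\cF$ and $\cF\not\subseteq\haus$: the claim that $F(T_\alpha)\in\cF$ is unsubstantiated, and your justification (``finite subsets of $T_\alpha$ are discrete'') does not address the real issue. A plot of $F(T_\alpha)$ landing in $\mathrm{span}([z_1],\dots,[z_k])$ has the local form $\sum_i r_i(u)[\pi(g_i(u))]$ with the $g_i$ arbitrary smooth real functions, and fineness requires the coefficient of each $[z_j]$ to be smooth; this is a statement about cancellations among generators \emph{outside} the span, not about the sub-diffeology of $\{z_1,\dots,z_k\}$. The two devices the paper uses for exactly this kind of claim both fail for $T_\alpha$: in Proposition~\ref{prop:cdpdvs} the coefficients are extracted by smooth bump functions on $\R$ separating the points of $A$, but $T_\alpha$ admits no nonconstant smooth maps to $\R$; and in Proposition~\ref{prop:FFV-not-haus} membership in $\cF$ is proved by shrinking neighbourhoods so that the collision pattern of the parametrizing functions is locally constant, whereas for $T_\alpha$ the collision condition $g_i(u)-g_j(u)\in\Z+\alpha\Z$ involves a dense countable subgroup and cannot be stabilized by shrinking. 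This is precisely why the paper constructs the more artificial example of Proposition~\ref{prop:FFV-not-haus} (a bijection $f$ with dense preimages) rather than using $F(T_\alpha)$; your claim would need a genuinely new argument, and it may simply be false.
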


We do not know whether the containments $\cSV \subseteq \cSD$ and $\cSD \subseteq \cF \cap \haus$ are proper.

The property of being finite-dimensional does not imply, nor is it
implied by, any of the properties considered above.
However, under this assumption, most of the properties agree.

\begin{thm}\label{thm:main2}
When restricted to finite-dimensional vector spaces, the collections
$\fine, \proj, \cSV, \cSD$ and $\cF$ agree.
\end{thm}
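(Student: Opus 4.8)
The plan is to derive Theorem~\ref{thm:main2} from Theorem~\ref{thm:main1} together with one essentially tautological observation. Theorem~\ref{thm:main1} provides the chain $\fine \subseteq \proj \subseteq \cSV \subseteq \cSD \subseteq \cF$, which persists when every collection is restricted to finite-dimensional vector spaces, so it is enough to prove the reverse containment at the top: every finite-dimensional $V \in \cF$ is fine. But a finite-dimensional $V$ is a finite-dimensional linear subspace of itself, and the subspace diffeology it inherits from $V$ is simply the original diffeology; since membership in $\cF$ forces every finite-dimensional subspace to be fine, $V$ is fine. Combined with the chain above, this collapses all five collections on finite-dimensional spaces.

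Since the proof of Theorem~\ref{thm:main1} may itself route through the finite-dimensional case (for example to obtain $\cSD \subset \cF$), I would also want an argument that does not quote Theorem~\ref{thm:main1} wholesale. All the ingredients but one are independent of that case: $\fine \subseteq \proj \subseteq \cSV \subseteq \cSD$ are the easy inclusions, $\fine \subseteq \cF$ because subspaces of fine diffeological vector spaces are fine, and the reverse containment on finite-dimensional spaces is the tautology above. So the real content is a single lemma: \emph{a finite-dimensional diffeological vector space whose smooth functionals separate points is fine.}

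To prove that lemma I would choose a basis, identifying the underlying vector space with $\R^n$; then the given diffeology $\mathcal{D}$ contains the standard one, and ``fine'' means $\mathcal{D}$ is the standard diffeology. By Boman's theorem, a diffeology on $\R^n$ that contains the standard one and has the same smooth curves must equal the standard one, so it suffices to show that every curve of $(\R^n,\mathcal{D})$ is an ordinary smooth curve. Using that the separating smooth functions remain smooth after one translates and rescales a curve, and that a nonconstant smooth real function is a local diffeomorphism near a regular value, the inverse function theorem then shows that a \emph{continuous} curve of $(\R^n,\mathcal{D})$ is smooth. The harder remaining steps are to exclude discontinuous curves --- for which I would invoke $\cSD \subset \haus$, so that the $D$-topology is Hausdorff, together with the fact that it is coarser than the Euclidean topology --- and to turn the ``separating'' hypothesis into a usable local statement by inducting on dimension through lines and hyperplanes, which stay in $\cSD$. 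Making the topological input and the dimension induction fit together cleanly is where I expect the main difficulty to lie.
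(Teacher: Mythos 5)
Your headline derivation is exactly the paper's proof of Theorem~\ref{thm:main2}: the chain of Theorem~\ref{thm:main1} restricts to finite-dimensional spaces, and a finite-dimensional $V$ is a finite-dimensional subspace of itself carrying its own diffeology, so $V\in\cF$ forces $V$ fine and the chain collapses. Your worry about circularity is reasonable to raise but turns out to be unfounded: the paper proves $\cSD\subseteq\cF$ (Theorem~\ref{thm:SD=>FDF}) in Section~\ref{se:SD=>FDF} by first reducing to, and then directly establishing, precisely the statement ``every finite-dimensional diffeological vector space in $\cSD$ is fine,'' with no appeal to Theorem~\ref{thm:main2}; likewise the collapse of $\fine$, $\proj$, $\cSV$ in finite dimensions is Theorem~\ref{thm:fd:fine<=>SV}, which has a short self-contained proof. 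So quoting Theorem~\ref{thm:main1} wholesale is legitimate, and to that extent your proposal is correct and matches the paper.

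Your sketched standalone proof of the key lemma, however, has a genuine gap exactly where you predicted the difficulty would lie. The reduction to curves via Boman's theorem is fine (it is Proposition~\ref{prop:fineness-on-curves}), and the continuous case is completable along the lines you indicate (the paper's Case~1: if the gradients of functions in $C^\infty(V,\R)$ span $\R^n$, the inverse function theorem makes a continuous plot smooth; if not, translation invariance along a direction orthogonal to their span contradicts separation). But invoking $\cSD\subset\haus$ cannot ``exclude discontinuous curves.'' The $D$-topology is \emph{coarser} than the Euclidean topology, every plot is automatically continuous for the $D$-topology, and Hausdorffness of a coarser topology says nothing about Euclidean continuity of a plot; no purely topological separation property can do this work, since the separation hypothesis must be used quantitatively. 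This is where the paper's real labour lies (Cases~2a and~2b): from a plot $p$ with $p(0)=0$ that is Euclidean-discontinuous at $0$, one rescales by $t^k$ (minimal $k$ making $t^kp(t)$ bounded) or, when no such $k$ exists, by a smooth function supplied by the extended special curve lemma (Lemma~\ref{le:escl}), to manufacture a plot $q$ with $q(0)=0$ and a sequence $t_k\to 0$ with $q(t_k)\to v\neq 0$ in $\R^n$; then every $f\in C^\infty(V,\R)$ satisfies $f(0)=f(v)$ because $f\circ q$ is smooth and $f$ is Euclidean-continuous, contradicting separation. Without some such construction your independent argument does not close.
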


Indeed, $\fine$ and $\cF$ clearly agree for finite-dimensional spaces,
so the containments must collapse to equalities.
Note that we prove part of Theorem~\ref{thm:main2} (see Theorem~\ref{thm:fd:fine<=>SV}) on the way to
proving Theorem~\ref{thm:main1}.

The final property we consider is the following.
Write $\cDV$ for the collection of diffeological vector spaces $V$ such
that a function $p : \R^n \to V$ is smooth if and only if $\ell \circ p : \R^n \to \R$
is smooth for each smooth linear functional $\ell : V \to \R$.
Except for the inclusion $\fine \subset \cDV$, the class $\cDV$ is independent
of all of the others we have considered.
However, under this assumption, we again find that many of the other conditions agree.

\begin{thm}\label{thm:main3}
When restricted to $V$ in $\cDV$, the collections $\cSV, \cSD, \cF$ and $\haus$ agree.
\end{thm}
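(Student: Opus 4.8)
The plan is to deduce all four equalities from Theorem~\ref{thm:main1} together with two reverse inclusions that hold only inside $\cDV$. Theorem~\ref{thm:main1} already gives, for all diffeological vector spaces, $\cSV\subseteq\cSD\subseteq\cF$ and $\cSD\subseteq\haus$, so after intersecting with $\cDV$ it suffices to prove
\[
  \cF\cap\cDV\subseteq\cSV \qquad\text{and}\qquad \haus\cap\cDV\subseteq\cSV .
\]
Granting these, the chain $\cSV\cap\cDV\subseteq\cSD\cap\cDV\subseteq\cF\cap\cDV\subseteq\cSV\cap\cDV$ collapses to equalities, and then $\cSD\cap\cDV\subseteq\haus\cap\cDV\subseteq\cSV\cap\cDV$ brings $\haus\cap\cDV$ into the same class, proving the theorem.

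The crux, and the step I expect to be the main obstacle, is a single observation about a space $V\in\cDV$. Let $N:=\bigcap_{\ell}\ker\ell$ be the intersection of the kernels of all smooth linear functionals $\ell:V\to\R$, so that $V\in\cSV$ exactly when $N=0$. I claim that the subspace diffeology on $N$ is indiscrete, i.e.\ every function into $N$ is a plot: given $f:\R^n\to N$, each composite $\ell\circ f$ is identically $0$ and hence smooth, so the defining property of $\cDV$ forces the composite of $f$ with the inclusion $N\hookrightarrow V$ to be a plot of $V$, and therefore $f$ to be a plot of $N$. Once this rigidity is in hand, the rest is bookkeeping; recognizing and exploiting it is the key point.

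From here both reverse inclusions follow by contradiction, assuming $N\neq 0$ and choosing $v\in N$ with $v\neq 0$. If $V\in\cF$, the line $\R v\subseteq N$ inherits the indiscrete diffeology (subspaces of indiscrete spaces are indiscrete), whereas $\cF$ requires the one-dimensional subspace $\R v$ to be fine, i.e.\ to carry the standard diffeology, which admits strictly fewer plots than the indiscrete one (for instance $t\mapsto |t|v$ is a plot of the latter but not the former); this contradiction gives $V\in\cSV$. If instead $V\in\haus$, I would define $g:\R\to V$ by $g(t)=v$ for $t\geq 0$ and $g(t)=0$ for $t<0$; since $\ell\circ g\equiv 0$ for every smooth linear $\ell$, the condition $V\in\cDV$ makes $g$ a plot, so for any $D$-open $U\ni v$ the set $g^{-1}(U)$ is open and contains $0$, hence meets $(-\infty,0)$, whence $0=g(t)\in U$ for some $t<0$. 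Thus every $D$-open neighbourhood of $v$ contains $0$, so $v$ and $0$ cannot be separated, contradicting $V\in\haus$ and again forcing $V\in\cSV$. Finally I would remark that the argument identifies all four collections, restricted to $\cDV$, with $\cSV\cap\cDV$, which contains $\fine$ since $\fine\subset\cDV$ and $\fine\subset\cSV$.
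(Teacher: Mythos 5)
Your proof is correct and rests on the same key observation as the paper's (Proposition~\ref{prop:dv=>indiscrete+sv} combined with Proposition~\ref{prop:DV=>equiv}): for $V\in\cDV$ the common kernel $\bigcap_{\ell}\ker\ell$ carries the indiscrete sub-diffeology, and its non-vanishing is incompatible with both $\cF$ and $\haus$, while the forward inclusions come from Theorem~\ref{thm:main1}. The only difference is cosmetic: you argue directly by contradiction from a non-zero indiscrete subspace, whereas the paper packages the same idea as a smooth splitting $V\cong W_0\oplus W_1$ with $W_0$ indiscrete and $W_1\in\cSV\cap\cDV$.
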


The proofs of the containments, and the examples showing that many 
inclusions do not hold, are spread throughout Section~\ref{se:dvs}.
For example, we show $\fine \subset \proj$ in Example~\ref{ex:projective} and Proposition~\ref{prop:fine=>PV}, 
$\proj \subset \cSV$ in Proposition~\ref{prop:PV=>SV} and Remark~\ref{rem:SV>P}(1),
$\cSD \subset \haus$ in Proposition~\ref{prop:SD=>H} and Example~\ref{ex:H=/=>SD},
$\haus \nsubseteq \cF$ in Example~\ref{ex:H=/=>SD},
and both $\cSD \neq \cF$ and 
$\cF \nsubseteq \haus$ in Proposition~\ref{prop:FFV-not-haus}.
That $\proj \nsubseteq \cDV$ is Proposition~\ref{prop:d-vs-slf},
and the proof of Theorem~\ref{thm:main3} is in Proposition~\ref{prop:DV=>equiv}.
The longest argument, which is the proof that $\cSD \subseteq \cF$,
is deferred until Section~\ref{se:SD=>FDF}.
Along the way, we also prove other results, such as the fact that a
diffeological vector space $V$ is fine if and only if every linear functional on $V$ is smooth,
and some necessary conditions for diffeological vector spaces
and free diffeological vector spaces to be projective.
In Section~\ref{se:applications}, we give some applications of our results to
the homological algebra of diffeological vector spaces.
For example, we show that every finite-dimensional subspace of a diffeological
vector space in $\cSV$ is a smooth direct summand.

We thank Chengjie Yu for the argument used in Case 1 of the proof of
Theorem~\ref{thm:SD=>FDF} in Section~\ref{se:SD=>FDF}
and the referee for many comments that helped improve the exposition.

\section{Background and conventions}\label{se:background}

In this section, we briefly recall some background on diffeological spaces.
For further details, we recommend the standard textbook~\cite{I2}.
For a concise introduction to diffeological spaces, we recommend~\cite{CSW},
particularly Section~2 and the introduction to Section~3.

\begin{de}[\cite{So2}]\label{de:diffeological-space}
A \dfn{diffeological space} is a set $X$
together with a specified set of functions $U \to X$ (called \dfn{plots})
for each open set $U$ in $\R^n$ and each $n \in \N$,
such that for all open subsets $U \subseteq \R^n$ and $V \subseteq \R^m$:
\begin{enumerate}
\item (Covering) Every constant function $U \to X$ is a plot.
\item (Smooth Compatibility) If $U \to X$ is a plot and $V \to U$ is smooth,
then the composite $V \to U \to X$ is also a plot.
\item (Sheaf Condition) If $U=\cup_i U_i$ is an open cover
and $U \to X$ is a function such that each restriction $U_i \to X$ is a plot,
then $U \to X$ is a plot.
\end{enumerate}

A function $f:X \rightarrow Y$ between diffeological spaces is
\dfn{smooth} if for every plot $p:U \to X$ of $X$,
the composite $f \circ p$ is a plot of $Y$.
\end{de}

The category of diffeological spaces and smooth maps is complete and cocomplete.
Given two diffeological spaces $X$ and $Y$,
we write $C^\infty(X,Y)$ for the set of all smooth maps from $X$ to $Y$.
An isomorphism in the category of diffeological spaces will be called a \dfn{diffeomorphism}.

Every manifold $M$ is canonically a diffeological space with the
plots taken to be all smooth maps $U \to M$ in the usual sense.
We call this the \dfn{standard diffeology} on $M$.
It is easy to see that smooth maps in the usual sense between
manifolds coincide with smooth maps between them with the standard diffeology. 

For a diffeological space $X$ with an equivalence relation~$\sim$,
the \dfn{quotient diffeology} on $X/{\sim}$ consists of all functions
$U \to X/{\sim}$ that locally factor through the quotient map $X \to X/{\sim}$ via plots of $X$.
A \dfn{subduction} is a map diffeomorphic to a quotient map.
That is, it is a map $X \to Y$ such that the plots in $Y$
are the functions that locally lift to $X$ as plots in $X$.

For a diffeological space $Y$ and a subset $A$ of $Y$,
the \dfn{sub-diffeology} consists of all functions $U \to A$ such that 
$U \to A \hookrightarrow Y$ is a plot of $Y$.
An \dfn{induction} is an injective smooth map $A \to Y$ such that a
function $U \to A$ is a plot of $A$ if and only if $U \to A \to Y$ is a plot of $Y$.

For diffeological spaces $X$ and $Y$, the \dfn{product diffeology} on $X \times Y$
consists of all functions $U \to X \times Y$ whose components $U \to X$
and $U \to Y$ are plots of $X$ and $Y$, respectively.

The \dfn{discrete diffeology} on a set is the diffeology whose plots are
the locally constant functions.
The \dfn{indiscrete diffeology} on a set is the diffeology in which
every function is a plot.

We can associate to every diffeological space the following topology:

\begin{de}[\cite{I1}]
Let $X$ be a diffeological space.
A subset $A$ of $X$ is \dfn{$D$-open} if $p^{-1}(A)$ is open in $U$ for
each plot $p : U \to X$.
The collection of $D$-open subsets of $X$ forms a topology on $X$ called the \dfn{$D$-topology}.
\end{de}

\begin{de}
 A \dfn{diffeological vector space} is a vector space $V$ with
 a diffeology such that addition $V \times V \to V$ and scalar multiplication
 $\R \times V \to V$ are smooth.
\end{de}

Let $V$ be a diffeological vector space. We write $L^{\infty}(V, \R)$ for the set of all smooth 
linear maps $V \to \R$, and $L(V, \R)$ for the set of all linear maps $V \to \R$.
We write $\DVect$ for the category of diffeological vector spaces and smooth linear maps.

\medskip
\noindent
\textbf{Conventions}
\smallskip

Throughout this paper, we use the following conventions.
Every subset of a diffeological space is equipped with the sub-diffeology
and every product is equipped with the product diffeology.
Every vector space is over the field $\R$ of real numbers, and every linear map is $\R$-linear.
By a subspace of a diffeological vector space, we mean a linear subspace with the sub-diffeology.
All manifolds are smooth, finite-dimensional, Hausdorff, second countable and 
without boundary, and are equipped with the standard diffeology.

\section{Diffeological vector spaces}\label{se:dvs}

In this section, we study a variety of conditions that a diffeological vector space
can satisfy.
Together, the results described here give the theorems stated in the introduction.
In addition, we present some auxiliary results, and give many examples and counterexamples.

\subsection{Fine diffeological vector spaces}

 In this subsection, we recall background on the fine diffeology, and then
 give two new characterizations.

 Given a vector space $V$, the set of all diffeologies on $V$ each of which makes $V$ 
 into a diffeological vector space, ordered by inclusion, is a complete lattice.
 This follows from~\cite[Proposition~4.6]{CW}, taking $X$ to be a point.
 The largest element in this lattice is the indiscrete diffeology,
 which is usually not interesting. Another extreme has the following special name 
 in the literature:

\begin{de}
 The \dfn{fine} diffeology on a vector space $V$ is the smallest diffeology on $V$
 making it into a diffeological vector space.
\end{de}

 For example, the fine diffeology on $\R^n$ is the standard diffeology.

\begin{rem}\label{rem:fineness}
 The fine diffeology is generated by the injective linear maps $\R^n \to V$ (\cite[3.8]{I2}).
 That is, the plots of the fine diffeology are the functions $p : U \to V$ such that 
 for each $u \in U$, there are an open neighbourhood $W$ of $u$ in $U$,
 an injective linear map $i : \R^n \to V$ for some $n \in \N$, and a smooth map $f : W \to \R^n$
 such that $p|_W = i \circ f$.

 One can show that if $V$ is any diffeological vector space and $p: W \to V$ is a plot
 that factors smoothly through some linear injection $\R^n \to V$,
 then every factorization of $p$ through a linear injection $\R^m \to V$ is smooth.
 It follows that every subspace of a fine diffeological vector space is fine (\cite{Wu}).
\end{rem}

 In fact, fineness of a diffeological vector space can be tested by smooth curves:

\begin{prop}\label{prop:fineness-on-curves}
 A diffeological vector space $V$ is fine if and only if for every plot $p:\R \ra V$ and every 
 $x \in \R$, there exist an open neighbourhood $W$ of $x$ in $\R$, an injective linear map $i:\R^n \ra V$ for 
 some $n \in \N$, and a smooth map $f:W \ra \R^n$ such that $p|_W = i \circ f$.
\end{prop}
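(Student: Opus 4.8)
For the forward direction there is nothing to prove: if $V$ is fine, then Remark~\ref{rem:fineness} already asserts that every plot of $V$, and hence every plot $\R \to V$, locally factors through an injective linear map.

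For the converse I would show that the given diffeology $\mathcal D$ on $V$ coincides with the fine one. As the fine diffeology is the smallest making $V$ a diffeological vector space, it is contained in $\mathcal D$, so by Remark~\ref{rem:fineness} it suffices to show that an arbitrary plot $p \colon U \to V$ of $\mathcal D$, with $U \subseteq \R^n$ open, factors locally through an injective linear map near each $u_0 \in U$. The plan has two steps.

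The first step, which I expect to be the crux, is to show that $p$ has image in a finite-dimensional subspace of $V$ on some neighbourhood of $u_0$. If this failed, then no neighbourhood of $u_0$ would have finite-dimensional image under $p$, and one could choose a sequence $u_j \to u_0$ with $p(u_0), p(u_1), p(u_2), \dots$ linearly independent, by taking $u_j$ in a shrinking ball about $u_0$ so that $p(u_j)$ is not in the span of $p(u_0)$ and $p(u_1), \dots, p(u_{j-1})$ (possible because this ball's image is not contained in that finite-dimensional span; this also forces $u_j \neq u_0$). After passing to a suitable subsequence, the special curve lemma (see~\cite{KM}) then produces a smooth curve $c \colon \R \to U$ with $c(0) = u_0$ and $c(t_j) = u_j$ for some sequence $t_j \to 0$. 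Now $p \circ c \colon \R \to V$ is a plot of $\mathcal D$, so by hypothesis it factors through an injective linear map $i \colon \R^m \to V$ on some interval $(-\delta, \delta)$ --- but then $p(u_0)$ together with the infinitely many linearly independent vectors $p(u_j)$ having $|t_j| < \delta$ would all lie in the $m$-dimensional subspace $\im(i)$, which is impossible. The essential input here is the special curve lemma, which is what lets the one-dimensional hypothesis control the $n$-dimensional plot $p$.

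For the second step, suppose $W$ is a neighbourhood of $u_0$ and $F$ a finite-dimensional subspace with $p(W) \subseteq F$. Fix a linear isomorphism $\phi \colon F \to \R^k$ and let $i := \phi^{-1} \colon \R^k \to V$, an injective linear map with image $F$; it then remains to show that $f := \phi \circ (p|_W) \colon W \to \R^k$ is smooth, since then $p|_W = i \circ f$ is the required factorization. Here I would invoke Boman's theorem (see~\cite{KM}) to reduce to checking that $f \circ \gamma$ is smooth for each smooth curve $\gamma \colon \R \to W$. Since $p \circ \gamma$ is a plot of $\mathcal D$ with values in $F$, the hypothesis gives, near each parameter, a factorization $p \circ \gamma = i' \circ g$ with $i' \colon \R^{m} \to V$ injective linear and $g$ smooth, and then $g$ has values in the subspace $(i')^{-1}(F)$ of $\R^m$, on which $\phi \circ i'$ is a linear map into $\R^k$; hence $f \circ \gamma = \phi \circ i' \circ g$ is locally a linear function of the smooth map $g$, so it is smooth. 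Therefore $f$ is smooth, $p$ factors locally through an injective linear map near the arbitrary point $u_0$, so every plot of $\mathcal D$ is a plot of the fine diffeology, and $V$ is fine.
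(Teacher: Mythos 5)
Your proposal is correct and follows essentially the same route as the paper's proof: first using the special curve lemma to show that a general plot locally lands in a finite-dimensional subspace (otherwise a smooth curve through a sequence with linearly independent images would violate the hypothesis), and then using Boman's theorem to reduce smoothness of the factorization to smooth curves. The only cosmetic difference is that where the paper cites the last observation of Remark~\ref{rem:fineness} to conclude that the curve-wise factorizations are smooth, you re-derive that fact inline by composing with a linear map on $(i')^{-1}(F)$.
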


\begin{proof}
 ($\Rightarrow$) This follows from the description of the fine diffeology in Remark~\ref{rem:fineness}.

 ($\Leftarrow$) Under the given assumptions, we will prove that $V$ is fine.
 Let $q : U \ra V$ be a plot and let $u$ be a point in $U$.
 We first show that there is an open neighbourhood $W$ of $u$ in $U$ such that
 $q|_W$ lands in a finite-dimensional subspace of $V$.
 If not, then there exists a sequence $u_i$ in $U$ converging to $u$
 such that $\{ q(u_i) \mid i \in \Z^+ \}$ is linearly independent in $V$. We may assume 
 that the sequence $u_i$ converges fast to $u$ (see~\cite[I.2.8]{KM}). By the Special Curve Lemma~\cite[I.2.8]{KM},
 there exists a smooth map $f:\R \ra U$ such that $f(1/i)=u_i$ and $f(0)=u$. Then 
 $q \circ f:\R \ra V$ is a plot which does not satisfy the hypothesis at $x=0$.

 So let $W$ be an open neighbourhood of $u$ in $U$ such that $q|_W$ factors as
 $i \circ g$, where $i : \R^m \ra V$ is a linear injection and $g : W \ra \R^m$ is a function.
 We will prove that $g$ is smooth.
 By Boman's theorem (see, e.g.,~\cite[Corollary~3.14]{KM}), it is enough to show that
 $g \circ r$ is smooth for every smooth curve $r : \R \ra W$.
 Since $i \circ g \circ r$ is smooth, our assumption implies that it locally factors smoothly
 through an injective linear map $\R^n \to V$.
 Then the last part of Remark~\ref{rem:fineness} implies that $g \circ r$ is locally smooth,
 and therefore smooth, as required.
\end{proof}

\begin{prop}\label{prop:fine<=>Linf=L}
 A diffeological vector space $V$ is fine if and only if $L^{\infty}(V, \R) = L(V, \R)$,
 i.e., if and only if every linear functional is smooth.
\end{prop}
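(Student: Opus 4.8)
The plan is to prove both implications. The forward direction ($\Rightarrow$) is the easy half: if $V$ is fine, I want to show every linear functional $\ell : V \to \R$ is smooth. By definition of smoothness it suffices to check that $\ell \circ p$ is a plot of $\R$ for every plot $p : U \to V$. Using the description of the fine diffeology in Remark~\ref{rem:fineness}, it is enough to handle plots of the form $p = i \circ f$ where $i : \R^n \to V$ is an injective linear map and $f : W \to \R^n$ is smooth (the general case being local). Then $\ell \circ i : \R^n \to \R$ is a linear map between finite-dimensional spaces, hence smooth in the ordinary sense, so $\ell \circ p = (\ell \circ i) \circ f$ is smooth. Thus $L^\infty(V,\R) = L(V,\R)$.

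For the converse ($\Leftarrow$), assume every linear functional on $V$ is smooth; I want to conclude $V$ is fine. By Proposition~\ref{prop:fineness-on-curves}, it suffices to show that for every plot $p : \R \to V$ and every $x \in \R$, $p$ locally factors smoothly through an injective linear map $\R^n \to V$. The key subtlety — and the main obstacle — is to establish that $p$ is \emph{locally finite-dimensional}, i.e., that near $x$ the image of $p$ lies in a finite-dimensional subspace of $V$. Here I would reuse the argument from the proof of Proposition~\ref{prop:fineness-on-curves}: if $p$ were not locally finite-dimensional near $x$, one produces (via fast-converging sequences and the Special Curve Lemma of~\cite{KM}) a smooth curve $f : \R \to \R$ with $f(1/i) = x_i$ and $f(0) = x$ such that $\{p(f(1/i))\}$ is linearly independent. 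One then uses the hypothesis that all linear functionals are smooth to derive a contradiction: choosing a linear functional $\ell$ that takes prescribed (wildly oscillating) values on the linearly independent set $\{p(x_i)\}$, the composite $\ell \circ p \circ f$ would fail to be smooth at $0$, contradicting smoothness of $\ell$. This is the step requiring care, since one must arrange the values so that no smooth function through $0$ can interpolate them.

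Once local finite-dimensionality is in hand, the rest is routine: on a neighbourhood $W$ of $x$, write $p|_W = i \circ g$ where $i : \R^n \to V$ is a linear injection onto the finite-dimensional span and $g : W \to \R^n$ is a function. I must show $g$ is smooth. Composing with the coordinate functionals on $\R^n$ reduces this to showing each component of $g$ is smooth; but each such component has the form $\ell' \circ p|_W$ for a suitable linear functional $\ell'$ on the span of the image (extend it arbitrarily to a linear functional $\ell$ on all of $V$), which is smooth by hypothesis. Hence $g$ is smooth, $p|_W = i \circ g$ is the required local factorization, and $V$ is fine by Proposition~\ref{prop:fineness-on-curves}.

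I expect the bulk of the work to be in the contradiction argument for local finite-dimensionality — specifically, being precise about how the Special Curve Lemma is applied and how to select a linear functional whose values along the sequence obstruct smoothness (e.g., by making $\ell(p(x_i))$ grow faster than any polynomial in $1/i$, or oscillate, so that $t \mapsto \ell(p(f(t)))$ has no continuous derivative of some order at $t = 0$). The two factorization arguments at the ends are short, and the forward direction is immediate from Remark~\ref{rem:fineness}.
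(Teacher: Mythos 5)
Your proof is correct and follows essentially the same strategy as the paper's: the forward direction from the generating description of the fine diffeology, and the converse by first establishing local finite-dimensionality via a linear functional with prescribed values on a linearly independent sequence, then using coordinate functionals to show that the local factorization through $\R^n$ is smooth. Two simplifications are worth noting. First, the step you single out as delicate is in fact immediate: you do not need $\ell\circ p$ to fail to be smooth to some high order, only to fail to be \emph{continuous}. Given $u_j\to u$ with $\{p(u_j)\}$ linearly independent, choose $\ell$ with $\ell(p(u_j))=1$ for $j$ odd and $0$ for $j$ even; then $\ell\circ p$ has no limit along the sequence $u_j\to u$, contradicting continuity of the smooth map $\ell\circ p$. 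Second, the detour through Proposition~\ref{prop:fineness-on-curves} and the Special Curve Lemma is unnecessary: the discontinuity argument applies directly to an arbitrary plot $p:U\to V$ and a sequence $u_j\to u$ in $U$, since $\ell\circ p:U\to\R$ is already smooth on $U$ without precomposing with a curve (and in your own setup $p$ is already a curve $\R\to V$, so the Special Curve Lemma produces nothing you did not already have). The paper's proof is exactly this streamlined version, working with a general plot $U\to V$ from the start.
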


\begin{proof}
 This follows from the proof of~\cite[Proposition~5.7]{Wu}. We give a direct proof here. 
 
 It is easy to check that if $V$ is fine, then every linear functional is smooth.

 To prove the converse, suppose that every linear functional $V \to \R$ is smooth.
 Let $p : U \to V$ be a plot and let $u \in U$.
 First we must show that when restricted to a neighbourhood of $u$,
 $p$ lands in a finite-dimensional subspace of $V$.
 If not, then there is a sequence $\{ u_j \}$ converging to $u$ such that the
 vectors $p(u_j)$ are linearly independent.
 Thus there is a linear functional $l : V \to \R$ such that $p(u_j)$ is sent to $1$ when $j$ is odd
 and $0$ when $j$ is even.  By assumption, $l$ is smooth.
 But $l \circ p$ is not continuous, contradicting the fact that $p$ is a plot.

 So now we know that $p$ locally factors through an injective linear map $i : \R^n \to V$.
 (Of course, $n$ may depend on the neighbourhood.)
 For each $1 \leq j \leq n$, there is a linear map $l_j : V \to \R$ such that $l_j \circ i$
 is projection onto the $j^{th}$ coordinate.  Since $l_j \circ p$ is smooth, it follows that
 the local factorizations through $\R^n$ are smooth.  Thus $V$ is fine.
\end{proof}

\subsection{Projective diffeological vector spaces}

\begin{de}
 A diffeological vector space $V$ is \dfn{projective} if for every linear
 subduction $f : W_1 \to W_2$ and every smooth linear map $g : V \to W_2$,
 there exists a smooth linear map $h : V \to W_1$ making the diagram
 \[
   \xymatrix{
                               &  W_1 \ar[d]^-f \\
     V \ar@{-->}[ur]^-h \ar[r]_-g &  W_2
   }
 \]
 commute.
 We write $\proj$ for the collection of projective diffeological vector spaces.
\end{de}

 We now describe what will be a recurring example in this paper.

\begin{de}
 The \dfn{free diffeological vector space generated by a diffeological space $X$}
 is the vector space $F(X)$ with basis consisting of the elements of $X$ and
 with the smallest diffeology
 making it into a diffeological vector space and such that the natural map
 $X \to F(X)$ is smooth.
\end{de}

This has the universal property that for any diffeological vector space $V$,
every smooth map $X \to V$ extends uniquely to a smooth linear map $F(X) \to V$.
Also, every plot in $F(X)$ is locally of the form $u \mapsto \sum_{i=1}^k r_i(u) [p_i(u)]$
for smooth functions $r_i : U \to \R$ and $p_i : U \to X$,
where for $x \in X$, $[x]$ denotes the corresponding basis vector in $F(X)$.
See~\cite[Proposition~3.5]{Wu} for details.

\begin{ex}\label{ex:projective}
 By~\cite[Corollary~6.4]{Wu}, when $M$ is a manifold, $F(M)$ is projective.
 However, by~\cite[Theorem~5.3]{Wu}, $F(X)$ is fine if and only if $X$ is discrete.
 So not every projective diffeological vector space is fine.
\end{ex}

\begin{prop}[{\cite[Corollary~6.3]{Wu}}]\label{prop:fine=>PV}
 Every fine diffeological vector space is projective.
\end{prop}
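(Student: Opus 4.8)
The plan is to prove the stronger statement that \emph{every} linear map out of a fine diffeological vector space is automatically smooth, and then to deduce projectivity by lifting a basis along the subduction. The first small observation is that a linear subduction $f : W_1 \to W_2$ is surjective: for any $y \in W_2$, the constant function $U \to W_2$ at $y$ is a plot by the covering axiom, so it must locally lift through $f$ to a plot of $W_1$, and in particular $y$ lies in the image of $f$.

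The key lemma is: if $V$ is fine and $\phi : V \to W$ is any linear map to a diffeological vector space $W$, then $\phi$ is smooth. To see this, let $p : U \to V$ be a plot. By the description of the fine diffeology in Remark~\ref{rem:fineness}, each $u \in U$ has an open neighbourhood $N$ on which $p$ factors as $i \circ f$ for a linear injection $i : \R^n \to V$ and a smooth map $f : N \to \R^n$. Then $\phi \circ p|_N = (\phi \circ i) \circ f$, and $\phi \circ i : \R^n \to W$ is linear, hence smooth since it is assembled from finitely many additions and scalar multiplications in $W$. So $\phi \circ p|_N$ is a plot of $W$; by the sheaf condition $\phi \circ p$ is a plot, and therefore $\phi$ is smooth. (This is the vector-valued counterpart of Proposition~\ref{prop:fine<=>Linf=L}.)

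With these in hand the proof is short. Let $f : W_1 \to W_2$ be a linear subduction and $g : V \to W_2$ a smooth linear map, with $V$ fine. Choose a basis $\{e_\alpha\}_{\alpha \in A}$ of $V$. By surjectivity of $f$, for each $\alpha$ there is $w_\alpha \in W_1$ with $f(w_\alpha) = g(e_\alpha)$; let $h : V \to W_1$ be the unique linear map with $h(e_\alpha) = w_\alpha$. Then $f \circ h$ and $g$ are linear maps agreeing on a basis, so $f \circ h = g$, and $h$ is smooth by the lemma. Hence $V$ is projective.

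I do not expect a genuine obstacle here; the only point requiring a little care is the lemma, where the argument must be made locally — one should not try to factor an arbitrary plot globally, but rather combine the local linear-factorization form of the fine diffeology (Remark~\ref{rem:fineness}) with the sheaf condition. As an alternative, one could argue abstractly: sending a vector space to itself with the fine diffeology is left adjoint to the forgetful functor $\DVect \to \Vect$, that forgetful functor carries subductions to surjections by the first observation, every vector space is projective in $\Vect$, and a left adjoint whose right adjoint sends the relevant epimorphisms to epimorphisms preserves projectives.
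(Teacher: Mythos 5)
Your proof is correct and takes essentially the same route as the paper: the paper also obtains $h$ as an arbitrary linear lift of $g$ (via a linear section of $f$) and relies on the fact that linear maps out of a fine space are automatically smooth. Your only addition is to spell out the vector-valued counterpart of Proposition~\ref{prop:fine<=>Linf=L}, which the paper leaves implicit.
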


\begin{proof}
 This follows immediately from Proposition~\ref{prop:fine<=>Linf=L}.
 One can take $h$ to be $k \circ g$, where $k$ is a linear section of $f$
 (which is not necessarily smooth).
\end{proof}

 Projective diffeological vector spaces and the homological algebra of
 diffeological vector spaces are studied further in~\cite{Wu}.

\subsection{Separation of points}

\begin{de}
 Let $X$ be a diffeological space.
 A set $A$ of functions with domain $X$ is said to \dfn{separate points} if
 for any $x, y \in X$ with $x \neq y$, there exists $f \in A$ such that $f(x) \neq f(y)$.
 We say that the \dfn{smooth functionals separate points} if $C^{\infty}(X, \R)$ separates points.
 We write $\cSD'$ for the collection of all such diffeological spaces $X$
 and $\cSD$ for the diffeological vector spaces whose underlying diffeological spaces are in $\cSD'$.
 If $V$ is a diffeological vector space, we say that the \dfn{smooth linear functionals separate points}
 if $L^{\infty}(V, \R)$ separates points,
 and we write $\cSV$ for the collection of all such diffeological vector spaces $V$.
\end{de}

 We will establish basic properties of such diffeological vector spaces below,
 and show that many familiar diffeological vector spaces have this property.
 Clearly, $\cSV \subseteq \cSD$.

\begin{ex}\label{ex:SD,SV}
 Every fine diffeological vector space is in $\cSV$, since the coordinate functions
 with respect to any basis are smooth and linear.
 Every manifold is in $\cSD'$, since the products of local coordinates with bump
 functions separate points
 (or by Whitney's embedding theorem).
\end{ex}

\pagebreak[2]  
\begin{prop}\label{prop:S-sub-(co)product}\
\begin{enumerate}
 \item If $W \ra V$ is a smooth linear injective map between diffeological vector spaces
 and $V \in \cSV$, then $W \in \cSV$. In particular, $\cSV$ is closed under taking subspaces.
 
 \item Let $\{V_i\}_{i \in I}$ be a set of diffeological vector spaces. 
 Then $\prod_{i \in I} V_i \in \cSV$ if and only if each $V_i \in \cSV$.
 
 \item Let $\{V_i\}_{i \in I}$ be a set of diffeological vector spaces. 
 Then $\oplus_{i \in I} V_i \in \cSV$ if and only if each $V_i \in \cSV$, 
 where $\oplus_{i \in I} V_i$ is the coproduct in $\DVect$ (see~\cite[Proposition~3.2]{Wu}).
\end{enumerate}
\end{prop}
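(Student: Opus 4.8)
The plan is to prove the three parts in order, reusing each part in the next. For part (1), given a smooth linear injection $i : W \to V$ with $V \in \cSV$, and distinct $x, y \in W$, note that $i(x) \neq i(y)$ by injectivity; pick $\ell \in L^\infty(V,\R)$ with $\ell(i(x)) \neq \ell(i(y))$, and then $\ell \circ i \in L^\infty(W,\R)$ separates $x$ and $y$. Since every subspace comes with a smooth linear inclusion, $\cSV$ is closed under subspaces. This part is essentially immediate.

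For part (2), the ``only if'' direction follows from part (1) applied to the smooth linear injections $V_i \hookrightarrow \prod_{j} V_j$ (the inclusion of a factor, sending $v$ to the tuple that is $v$ in slot $i$ and $0$ elsewhere; this is smooth because each component is either the identity or zero). For the ``if'' direction, suppose each $V_i \in \cSV$ and take $x \neq y$ in $\prod_i V_i$. Then $x_i \neq y_i$ for some $i$, so choose $\ell_i \in L^\infty(V_i, \R)$ with $\ell_i(x_i) \neq \ell_i(y_i)$; then $\ell_i \circ \pi_i$ is smooth (composite of the smooth projection with $\ell_i$) and linear, and separates $x$ from $y$.

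For part (3), recall from \cite[Proposition~3.2]{Wu} that the coproduct $\oplus_{i \in I} V_i$ in $\DVect$ has underlying vector space the usual algebraic direct sum and sits inside $\prod_{i \in I} V_i$; more precisely, there are smooth linear injections $V_i \to \oplus_{j} V_j$ and a smooth linear map $\oplus_i V_i \to \prod_i V_i$ which is injective. The ``only if'' direction then follows from part (1) applied to $V_i \to \oplus_j V_j$. For the ``if'' direction, assuming each $V_i \in \cSV$, part (2) gives $\prod_i V_i \in \cSV$, and then part (1) applied to the injection $\oplus_i V_i \to \prod_i V_i$ gives $\oplus_i V_i \in \cSV$.

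The only point requiring care — the main (minor) obstacle — is to confirm the maps invoked in parts (2) and (3) are genuinely smooth and linear for the product and coproduct \emph{diffeologies}: the factor inclusions into a product, the projections out of a product, and the canonical injective map $\oplus_i V_i \to \prod_i V_i$. For the product this is immediate from the definition of the product diffeology (a map into a product is a plot iff each component is), and for the coproduct it is exactly the content of \cite[Proposition~3.2]{Wu}; so no serious difficulty arises.
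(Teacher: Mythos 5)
Your proof is correct and fills in exactly the details the paper omits: the paper's own proof of this proposition is just ``This is straightforward,'' and your argument (pull back functionals along smooth linear injections for (1), use factor inclusions and projections for (2), and route the coproduct through the product via the canonical smooth linear injection for (3)) is the intended routine verification.
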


\begin{proof}
 This is straightforward.
\end{proof}

\begin{prop}\label{prop:S-function}
 If $V \in \cSV$ and $X$ is a diffeological space, then $C^\infty(X,V) \in \cSV$.
\end{prop}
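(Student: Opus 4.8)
The plan is to exhibit, for each pair of distinct elements $f, g \in C^\infty(X,V)$, a smooth linear functional on $C^\infty(X,V)$ that distinguishes them. Since $f \neq g$, there is some point $x_0 \in X$ with $f(x_0) \neq g(x_0)$ in $V$. So it suffices to produce, for each $x_0 \in X$, enough smooth linear functionals on $C^\infty(X,V)$ that jointly separate functions with different values at $x_0$; composing the evaluation-at-$x_0$ idea with the separating functionals on $V$ should do it.

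First I would recall that $C^\infty(X,V)$ is itself a diffeological vector space under pointwise operations, with the functional diffeology (plots are maps $U \to C^\infty(X,V)$ whose adjoint $U \times X \to V$ is smooth). The key step is to check that for each fixed $x_0 \in X$, the evaluation map $\mathrm{ev}_{x_0} : C^\infty(X,V) \to V$, $h \mapsto h(x_0)$, is smooth and linear. Linearity is immediate from the pointwise vector space structure. Smoothness follows because if $p : U \to C^\infty(X,V)$ is a plot, its adjoint $\tilde p : U \times X \to V$ is smooth, and $\mathrm{ev}_{x_0} \circ p$ equals $u \mapsto \tilde p(u, x_0)$, which is the composite of $\tilde p$ with the smooth map $u \mapsto (u, x_0)$, hence a plot of $V$.

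Then, given $f, g \in C^\infty(X,V)$ with $f \neq g$, pick $x_0$ with $f(x_0) \neq g(x_0)$. Since $V \in \cSV$, there is a smooth linear functional $\ell : V \to \R$ with $\ell(f(x_0)) \neq \ell(g(x_0))$. The composite $\ell \circ \mathrm{ev}_{x_0} : C^\infty(X,V) \to \R$ is smooth (composite of smooth maps) and linear (composite of linear maps), and it separates $f$ from $g$. Hence $C^\infty(X,V) \in \cSV$.

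I do not expect any serious obstacle here; the only point requiring a little care is confirming that the evaluation maps are smooth for the functional diffeology on $C^\infty(X,V)$, which is a standard fact about the cartesian closed structure of diffeological spaces and follows directly from the definition of the functional diffeology via the adjunction. Everything else is formal bookkeeping with linearity and composition. (One could alternatively phrase the argument by noting that $\mathrm{ev}_{x_0}$ factors the identity-on-$V$ through $C^\infty(X,V)$ via constant functions and then invoke Proposition~\ref{prop:S-sub-(co)product}(1) applied to a suitable injection, but the direct argument above is cleaner.)
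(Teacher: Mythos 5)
Your proposal is correct and is essentially the paper's own argument: the proof in the paper simply notes that every evaluation map $C^\infty(X,V) \to V$ is smooth and linear, and your composition with a separating functional on $V$ is the intended completion of that observation.
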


\begin{proof}
  This follows from the fact that every evaluation map $C^\infty(X,V) \ra V$ is smooth and linear.
\end{proof}

\begin{samepage}
\begin{prop}\label{prop:S-free}
 The following are equivalent:
  \begin{enumerate}
    \item $X \in \cSD'$.

    \item $F(X) \in \cSV$.

    \item $F(X) \in \cSD$.
  \end{enumerate}
\end{prop}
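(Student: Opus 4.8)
The plan is to prove the cycle of implications $(1)\Rightarrow(2)\Rightarrow(3)\Rightarrow(1)$. The implication $(2)\Rightarrow(3)$ is immediate since $\cSV\subseteq\cSD$ for any diffeological vector space, so the real content is in $(1)\Rightarrow(2)$ and $(3)\Rightarrow(1)$.

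For $(1)\Rightarrow(2)$, suppose $X\in\cSD'$, so that $C^\infty(X,\R)$ separates points of $X$. I want to show $L^\infty(F(X),\R)$ separates points of $F(X)$. The key tool is the universal property of $F(X)$: every smooth map $X\to\R$ extends uniquely to a smooth linear map $F(X)\to\R$; more generally every smooth map $X\to V$ into a diffeological vector space extends to a smooth linear map on $F(X)$. So it suffices to separate two distinct elements $v=\sum_{i} a_i[x_i]$ and $w=\sum_j b_j[y_j]$ of $F(X)$, where we may assume the $x_i$ are distinct and the $y_j$ are distinct. Collecting terms over the (finite) combined set of basis elements appearing, $v-w$ is a nonzero finite linear combination $\sum_{k=1}^m c_k[z_k]$ with the $z_k\in X$ distinct and not all $c_k$ zero. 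Using that $C^\infty(X,\R)$ separates the finitely many points $z_1,\dots,z_m$, together with a partition-of-unity / bump-function argument, I can build smooth functions $f_1,\dots,f_m\colon X\to\R$ with $f_k(z_l)=\delta_{kl}$; extending, say, $f_k$ to $\ell_k\in L^\infty(F(X),\R)$ gives $\ell_k(v-w)=c_k$, and since some $c_k\neq 0$ this shows $v\neq w$ are separated. Actually one even gets a cleaner statement: the map $X\to\R^m$, $z\mapsto(f_1(z),\dots,f_m(z))$, is smooth, so it suffices to check that one can separate finitely many distinct points of a space in $\cSD'$ by a single smooth map to some $\R^m$ that is injective on them — this follows by composing separating functionals.

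For $(3)\Rightarrow(1)$, suppose $F(X)\in\cSD$, i.e.\ $C^\infty(F(X),\R)$ separates points of $F(X)$. Given distinct $x,y\in X$, the basis vectors $[x]$ and $[y]$ are distinct elements of $F(X)$, so there is a smooth $g\colon F(X)\to\R$ with $g([x])\neq g([y])$. Composing with the smooth structure map $\iota\colon X\to F(X)$, which sends $z\mapsto[z]$, gives a smooth function $g\circ\iota\colon X\to\R$ with $(g\circ\iota)(x)=g([x])\neq g([y])=(g\circ\iota)(y)$. Hence $C^\infty(X,\R)$ separates points of $X$, so $X\in\cSD'$. Note this direction does not use linearity of $g$, which is why it closes the loop with the weaker condition $(3)$ rather than $(2)$.

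The main obstacle is the bump-function construction in $(1)\Rightarrow(2)$: from the mere existence of separating smooth functions one must produce, for a fixed finite set of distinct points, smooth functions realizing prescribed values (in particular a "dual basis" pattern $f_k(z_l)=\delta_{kl}$). The point is that $\cSD'$ only guarantees that for each pair some functional distinguishes them, not that a single map is jointly injective; one first combines finitely many separating functionals into a single smooth map $h\colon X\to\R^N$ that is injective on $\{z_1,\dots,z_m\}$, and then pulls back bump functions on $\R^N$ supported near the distinct points $h(z_k)$. With those in hand the extension via the universal property of $F(X)$ is routine, and the rest of the argument is bookkeeping.
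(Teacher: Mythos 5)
Your proposal is correct and takes essentially the same route as the paper: the content is $(1)\Rightarrow(2)$ via the universal property of $F(X)$ together with a smooth function taking prescribed values on the finitely many distinct basis points appearing in a nonzero element, while $(2)\Rightarrow(3)\Rightarrow(1)$ are the easy observations you give. The paper simply asserts the existence of $f\in C^\infty(X,\R)$ with $f(x_1)=1$ and $f(x_i)=0$ for $i>1$; your bump-function construction (pulling back along a jointly injective smooth map to $\R^N$ built from separating functionals) is a valid way to supply that detail.
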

\end{samepage}

\begin{proof}
 It is enough to prove $(1) \Rightarrow (2)$, 
 since $(2) \Rightarrow (3) \Rightarrow (1)$ are straightforward.
 Let $v \in F(X)$ be non-zero.
 It suffices to show that there is a smooth linear functional $F(X) \to \R$ which
 is non-zero on $v$.
 Write $v = \sum_{i=1}^k r_i [x_i]$ with $k \geq 1$, $r_i$ nonzero for each $i$,
 and the $x_i$ distinct.
 Since $C^\infty(X,\R)$ separates points of $X$,
 there exists $f \in C^\infty(X,\R)$ such that $f(x_1)=1$ and $f(x_i)=0$ for each $i > 1$. 
 By the universal property of $F(X)$, $f$ extends to a smooth linear map $F(X) \to \R$
 which sends $v$ to $r_1$, which is nonzero.
\end{proof}

\begin{prop}\label{prop:PV=>SV}
 Every projective diffeological vector space is in $\cSV$.
\end{prop}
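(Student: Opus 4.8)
The plan is to realize an arbitrary projective diffeological vector space $V$ as a retract (in $\DVect$) of a free diffeological vector space $F(X)$ for a suitable diffeological space $X$, and then to invoke the fact that free diffeological vector spaces on ``nice'' spaces lie in $\cSV$ together with the closure of $\cSV$ under subspaces (Proposition~\ref{prop:S-sub-(co)product}(1)). Concretely, first I would take $X$ to be the underlying diffeological space of $V$ itself, so that the identity map $X \to V$ is smooth; by the universal property of $F(X)$ this extends to a smooth linear surjection $\varphi : F(X) \to V$, which one checks is a linear subduction (every plot of $V$ lifts locally, since plots of $V$ are in particular smooth maps into $X$, hence give plots of $F(X)$ mapping down to the original plot). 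Applying projectivity of $V$ to the subduction $\varphi$ and the smooth linear map $\mathrm{id}_V : V \to V$ yields a smooth linear section $s : V \to F(X)$ with $\varphi \circ s = \mathrm{id}_V$. In particular $s$ is a smooth linear injection.

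The second step is to know that $F(X) \in \cSV$, or at least that $V$ embeds smoothly and linearly into some member of $\cSV$. By Proposition~\ref{prop:S-free}, $F(X) \in \cSV$ precisely when $X \in \cSD'$, i.e.\ when the smooth functionals on $X$ separate points. Here there is a subtlety: for $X$ the underlying diffeological space of $V$, this is not automatic — indeed it is essentially the conclusion $V \in \cSD$ we are trying to reach. So rather than using $X = V$, I would instead note that $V$ always admits a smooth surjection from a \emph{discrete} space onto its underlying set, but that loses the diffeology; better is to use that there is always a linear subduction $F(Y) \to V$ where $Y$ is some diffeological space with $Y \in \cSD'$. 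The cleanest choice is to let $Y$ be a disjoint union of Euclidean opens mapping to $V$ via all plots: plots of $V$ already cover $V$ compatibly, so $F(\coprod U_\alpha) \to V$ (summing over a generating set of plots $p_\alpha : U_\alpha \to V$) is a linear subduction, and $\coprod U_\alpha$ is a manifold (a disjoint union of opens in Euclidean spaces), hence in $\cSD'$ by Example~\ref{ex:SD,SV}. Then Proposition~\ref{prop:S-free} gives $F(\coprod U_\alpha) \in \cSV$.

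With this corrected choice, the argument closes: projectivity applied to the linear subduction $\psi : F(Y) \to V$ (with $Y = \coprod_\alpha U_\alpha$ a manifold) and to $\mathrm{id}_V$ produces a smooth linear section $s : V \to F(Y)$, so $V$ is a subspace (via a smooth linear injection) of $F(Y) \in \cSV$, whence $V \in \cSV$ by Proposition~\ref{prop:S-sub-(co)product}(1). The main obstacle I anticipate is precisely the verification that $\psi : F(Y) \to V$ is a genuine linear \emph{subduction} and not merely a surjection: one must check that an arbitrary plot $q : U \to V$ lifts locally to a plot of $F(Y)$, which amounts to writing $q$ locally as one of the generating plots $p_\alpha$ (or a smooth reparametrization thereof) and then pushing it into $F(Y)$ via the inclusion of the $\alpha$-th summand $U_\alpha \hookrightarrow Y$; and one must confirm that $Y$, as a coproduct of Euclidean opens, indeed lies in $\cSD'$, which follows since each $U_\alpha$ is a manifold and smooth bump functions separate its points, extended by zero across the other summands. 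Everything else — the universal property of $F$, smoothness and linearity of the maps involved — is routine.
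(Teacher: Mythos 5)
Your proof is correct and is essentially the paper's argument: the paper simply cites \cite[Corollary~6.15]{Wu} for the fact that every projective diffeological vector space is a retract of a coproduct of $F(U)$'s with $U$ open in a Euclidean space, which is exactly the subduction-splitting construction you carry out by hand with $F\bigl(\coprod_\alpha U_\alpha\bigr) \cong \oplus_\alpha F(U_\alpha)$, before concluding via Propositions~\ref{prop:S-free} and~\ref{prop:S-sub-(co)product}. One cosmetic caveat: $\coprod_\alpha U_\alpha$ need not be a manifold under the paper's conventions (second countable, fixed dimension), but it is manifestly in $\cSD'$ since smooth functions on a coproduct are just smooth functions on each summand, and that is all you actually use.
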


\begin{proof}
 By Example~\ref{ex:SD,SV}, every open subset $U$ of a Euclidean space is in $\cSD'$.
 So Proposition~\ref{prop:S-free} implies that $F(U)$ is in $\cSV$.
 Corollary~6.15 of~\cite{Wu} says that
 every projective diffeological vector space is a retract of a coproduct of $F(U)$'s in $\DVect$.
 Therefore, it follows from Proposition~\ref{prop:S-sub-(co)product}(3) and~(1) that
 every projective diffeological vector space is in $\cSV$.
\end{proof}

\begin{rem}\label{rem:SV>P}\
 \begin{enumerate}
  \item Not every diffeological vector space in $\cSV$ is projective. For example, let $V:=\prod_\omega \R$
   be the product of countably many copies of $\R$. By Proposition~\ref{prop:S-sub-(co)product}(2),
   $V$ is in $\cSV$. But~\cite[Example~4.3]{Wu} shows
   that $V$ is not projective.
   
  \item $\cSV$ is not closed under taking quotients in $\DVect$. For example, $F(\pi):F(\R) \ra F(T_\alpha)$ 
   is a linear subduction, where $\alpha$ is an irrational and
   $\pi:\R \ra T_\alpha := \R/(\Z+\alpha \Z)$ is the projection to the $1$-dimensional irrational torus.
   By Proposition~\ref{prop:S-free}, $F(\R)$ is in $\cSV$,
   but $F(T_\alpha)$ is not in $\cSV$ since $T_\alpha$ is not in $\cSD'$.
   In particular, the free diffeological vector space $F(T_{\alpha})$ is not projective,
   as observed in \cite[Example~4.3]{Wu}.
  \end{enumerate}
\end{rem}

 Here is an easy fact:

\begin{prop}\label{prop:SD=>H}
 The $D$-topology of every diffeological space in $\cSD'$ is Hausdorff.
 In particular, $\cSD \subseteq \haus$.
\end{prop}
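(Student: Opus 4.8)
The plan is to prove the statement about the $D$-topology directly from the definition of the $D$-topology and the hypothesis that $C^\infty(X,\R)$ separates points, and then derive the inclusion $\cSD \subseteq \haus$ as an immediate corollary. The key observation is that each smooth functional $f : X \to \R$ is continuous for the $D$-topology on $X$ (and the standard topology on $\R$): indeed, if $A \subseteq \R$ is open and $p : U \to X$ is any plot, then $f \circ p : U \to \R$ is smooth, hence continuous, so $p^{-1}(f^{-1}(A)) = (f \circ p)^{-1}(A)$ is open in $U$; since this holds for every plot $p$, the set $f^{-1}(A)$ is $D$-open. So smooth functionals give continuous maps to $\R$.

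Given that, the Hausdorff property follows by the standard argument: take $x \neq y$ in $X$. By hypothesis there is $f \in C^\infty(X,\R)$ with $f(x) \neq f(y)$. Choose disjoint open sets $A, B \subseteq \R$ with $f(x) \in A$ and $f(y) \in B$. Then $f^{-1}(A)$ and $f^{-1(B)}$ are disjoint $D$-open subsets of $X$ containing $x$ and $y$ respectively, so the $D$-topology is Hausdorff. Finally, if $V$ is a diffeological vector space in $\cSD$, its underlying diffeological space is in $\cSD'$, so its $D$-topology is Hausdorff, i.e.\ $V \in \haus$; hence $\cSD \subseteq \haus$.

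I expect no real obstacle here — the only point requiring a moment's care is the verification that smooth maps to $\R$ are $D$-continuous, and even that is essentially immediate from unwinding the definition of the $D$-topology (it is the analogue of the standard fact that smooth maps between diffeological spaces are continuous for the $D$-topologies, applied with target a manifold whose $D$-topology is the usual one). I would state that continuity fact explicitly (or cite it, since it is standard; see~\cite{I1,CSW}) so that the Hausdorff argument reads cleanly, and then close with the one-line deduction of $\cSD \subseteq \haus$.
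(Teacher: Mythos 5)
Your proposal is correct and follows essentially the same route as the paper: the paper's proof is a one-liner invoking the fact that smooth maps are continuous for the $D$-topologies, and you simply make that fact and the resulting standard separation argument explicit. (Minor typo: $f^{-1(B)}$ should read $f^{-1}(B)$.)
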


\begin{proof}
 This follows from the fact that every smooth map is continuous when both domain and codomain are
 equipped with the $D$-topology.
\end{proof}

\begin{cor}
 If $F(X)$ is projective, then $X$ is Hausdorff.
\end{cor}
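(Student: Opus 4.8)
The plan is to assemble this as an immediate consequence of the three preceding results, read as a chain of implications. Assume $F(X)$ is projective. First I would invoke Proposition~\ref{prop:PV=>SV}, which says that every projective diffeological vector space lies in $\cSV$; hence $F(X) \in \cSV$.

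Next I would apply the equivalence $(2) \Leftrightarrow (1)$ of Proposition~\ref{prop:S-free}: $F(X) \in \cSV$ holds if and only if $X \in \cSD'$, i.e.\ the smooth functionals $X \to \R$ separate the points of $X$. So from the previous step we conclude $X \in \cSD'$.

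Finally, Proposition~\ref{prop:SD=>H} tells us that the $D$-topology of every diffeological space in $\cSD'$ is Hausdorff, and in particular that the $D$-topology of $X$ is Hausdorff, which is what "Hausdorff" means here for the diffeological space $X$. Stringing these together gives the statement.

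There is essentially no obstacle: all of the real content has already been established in Propositions~\ref{prop:PV=>SV}, \ref{prop:S-free}, and~\ref{prop:SD=>H}, and this corollary merely chains them as $F(X) \text{ projective} \Rightarrow F(X)\in\cSV \Rightarrow X\in\cSD' \Rightarrow X \text{ Hausdorff}$. The only thing worth stating explicitly is which direction of the equivalence in Proposition~\ref{prop:S-free} is being used, so that the reader sees no circularity.
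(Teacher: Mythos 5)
Your proposal is correct and is essentially the paper's own argument: projectivity gives $F(X)\in\cSV$ by Proposition~\ref{prop:PV=>SV}, the equivalence in Proposition~\ref{prop:S-free} transfers this to $X\in\cSD'$, and Proposition~\ref{prop:SD=>H} concludes. The only cosmetic difference is that the paper passes through $\cSD$ (clause (3) of Proposition~\ref{prop:S-free}) rather than $\cSV$ (clause (2)), which changes nothing.
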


\begin{proof}
 If $F(X)$ is projective, then it is in $\cSD$ by Proposition~\ref{prop:PV=>SV},
 and so $X$ is in $\cSD'$ by Proposition~\ref{prop:S-free}.
 Thus $X$ is Hausdorff, by Proposition~\ref{prop:SD=>H}.
\end{proof}

 This gives another proof that free diffeological vector spaces are not always projective.
 For example, if a set $X$ with more than one point is equipped with the indiscrete diffeology,
 then the $D$-topology on $X$ is indiscrete as well, and hence $F(X)$ is not projective.

\begin{ex}\label{ex:H=/=>SD}
 The converse of Proposition~\ref{prop:SD=>H} does not hold.
 Write $C(\R)$ for the vector space $\R$ equipped with the continuous diffeology,
 so that a function $p : U \to C(\R)$ is a plot if and only if it is continuous~(\cite[Section~3]{CSW}).
 Then $C(\R)$ is a Hausdorff diffeological vector space, as the $D$-topology on $C(\R)$ is the usual topology.
 But one can show that $C^{\infty}(C(\R), \R)$ consists of constant functions~(\cite[Example~3.15]{CW}), so $C(\R)$ is not in $\cSD$.
\end{ex}

 We will use the following result in the next subsection.

\begin{thm}\label{thm:fd:fine<=>SV}
 Let $V$ be a finite-dimensional diffeological vector space.
 Then the following are equivalent:
 \begin{enumerate}
  \item $V$ is fine.
  \item $V$ is projective.
  \item $V$ is in $\cSV$.
 \end{enumerate}
\end{thm}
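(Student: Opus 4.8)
The plan is to prove the cycle of implications $(1) \Rightarrow (2) \Rightarrow (3) \Rightarrow (1)$, using the already-established general results to handle the first two and concentrating all the real work on $(3) \Rightarrow (1)$, which is the only place where finite-dimensionality is essential. The implication $(1) \Rightarrow (2)$ is immediate from Proposition~\ref{prop:fine=>PV}, and $(2) \Rightarrow (3)$ is immediate from Proposition~\ref{prop:PV=>SV}; neither uses the hypothesis that $\dim V < \infty$.

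For $(3) \Rightarrow (1)$, suppose $V$ is finite-dimensional and in $\cSV$, say $\dim V = n$. The goal, by Proposition~\ref{prop:fine<=>Linf=L}, is to show that every linear functional $V \to \R$ is smooth. The strategy is to produce $n$ linearly independent \emph{smooth} linear functionals $\ell_1, \dots, \ell_n : V \to \R$; since $V^* = L(V,\R)$ has dimension $n$, these would then span $L(V,\R)$, forcing every linear functional to be smooth and hence $V$ to be fine. To build these functionals, I would argue inductively: starting from a nonzero smooth linear functional $\ell_1$ (which exists by the $\cSV$ hypothesis applied to any nonzero vector), suppose we have already found smooth linear functionals $\ell_1,\dots,\ell_k$ that are linearly independent, with $k < n$. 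Then $\bigcap_{i=1}^k \ker \ell_i$ is a subspace of $V$ of dimension $> 0$, so it contains a nonzero vector $v$. Since $V \in \cSV$, there is a smooth linear functional $m : V \to \R$ with $m(v) \neq 0$; the restriction of $m$ to $\bigcap \ker \ell_i$ is nonzero, so $m$ is not in the span of $\ell_1,\dots,\ell_k$ (as those all vanish on $v$). Setting $\ell_{k+1} := m$ extends the independent family, and the induction terminates at $k = n$.

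With $\ell_1, \dots, \ell_n$ in hand, the map $\ell = (\ell_1,\dots,\ell_n) : V \to \R^n$ is linear, smooth, and injective (its kernel is $\bigcap \ker \ell_i = 0$ by the dimension count), hence a linear isomorphism of vector spaces. Each coordinate of the inverse $\ell^{-1}$ is a linear functional on $\im \ell = \R^n$, and composing with $\ell_i$ recovers $\ell_i$'s coordinate; more to the point, for an arbitrary linear functional $\lambda : V \to \R$ we can write $\lambda = \sum_i a_i \ell_i$ for suitable scalars $a_i$ (since the $\ell_i$ span $L(V,\R)$), and a finite linear combination of smooth functions is smooth. Therefore $L^\infty(V,\R) = L(V,\R)$, so $V$ is fine by Proposition~\ref{prop:fine<=>Linf=L}.

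I do not expect any serious obstacle here: the argument is elementary linear algebra once the $\cSV$ hypothesis is invoked, and the only subtlety is making sure the inductive step genuinely enlarges the independent set, which is why I phrased it via a vector $v$ in the common kernel on which the new functional is nonzero. The one point requiring a little care is that finite-dimensionality is used twice — once to guarantee $\bigcap_{i=1}^k \ker \ell_i \neq 0$ whenever $k < n$, and once to conclude that $n$ independent smooth functionals already span all of $L(V,\R)$ — and both uses are essential, consistent with the fact that $\proj \subsetneq \cSV$ in the infinite-dimensional setting (Remark~\ref{rem:SV>P}(1)).
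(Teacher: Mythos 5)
Your proof is correct. The implications $(1) \Rightarrow (2) \Rightarrow (3)$ are handled exactly as in the paper, via Propositions~\ref{prop:fine=>PV} and~\ref{prop:PV=>SV}. For $(3) \Rightarrow (1)$ you take a slightly different route from the paper's main proof: the paper chooses a basis $f_1,\dots,f_k$ of $L^\infty(V,\R)$, assembles it into a smooth linear map $f : V \to \R^k$ that is injective (by the $\cSV$ hypothesis) and hence bijective, and then concludes from the minimality of the fine diffeology on $\R^k$ that $f$ must be a diffeomorphism, so $V$ is fine. You instead run a dimension count: an inductive construction of $\dim V$ linearly independent smooth linear functionals shows $\dim L^\infty(V,\R) \geq \dim V = \dim L(V,\R)$, forcing $L^\infty(V,\R) = L(V,\R)$, and then Proposition~\ref{prop:fine<=>Linf=L} finishes. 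This is precisely the alternative argument the paper sketches in the paragraph immediately following its proof; your induction just makes explicit the inequality $\dim L^\infty(V,\R) \geq \dim V$ that the paper asserts without detail. The two approaches are of comparable length; the paper's main proof has the small bonus of exhibiting $V$ as diffeomorphic to standard $\R^k$ directly, while yours leans on the (already proved) characterization of fineness by smoothness of all linear functionals. Your closing observation about where finite-dimensionality enters is accurate, and the brief detour through the inverse of $\ell$ in your third paragraph is harmless but unnecessary, since the spanning argument alone suffices.
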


\begin{proof}
 By Propositions~\ref{prop:fine=>PV} and~\ref{prop:PV=>SV}, $(1) \implies (2) \implies (3)$, for all $V$.
 So it remains to prove $(3) \implies (1)$.
 Assume that $V$ is finite-dimensional and in $\cSV$.
 Choose a basis $f_1, \ldots, f_k$ for $L^{\infty}(V, \R)$, and use it to give a smooth linear map $f : V \to \R^k$.
 Note that $k \leq \dim V$.
 Since $V$ is in $\cSV$, $f$ is injective, and hence surjective.
 The diffeology on $\R^k$ is the fine diffeology, which is the smallest diffeology making
 it into a diffeological vector space.
 The map $f:V \to \R^k$ is a smooth linear bijection, so the diffeology on $V$ must be fine as well
 (and $f$ must be a diffeomorphism).
\end{proof}

The implication $(3) \implies (1)$ also follows from Proposition~\ref{prop:fine<=>Linf=L}, since
$V$ in $\cSV$ implies that $\dim L(V,\R) \geq \dim L^{\infty}(V, \R) \geq \dim V = \dim L(V, \R)$,
and so $L^{\infty}(V, \R) = L(V, \R)$.

\subsection{Diffeological vector spaces whose finite-dimensional subspaces are fine}

 Write $\cF$ for the collection of diffeological vector spaces whose
 finite-dimensional subspaces are fine.
 One motivation for studying this collection is the following.
 In~\cite{CW}, we defined a diffeology on Hector's tangent spaces~\cite{He} which makes
 them into diffeological vector spaces.  While they are not fine in general, we know of no
 examples that are not in $\cF$.

 As an example, one can show that $\prod_{\omega} \R$ is in $\cF$.
 This also follows from the next result, which is based on a suggestion of Y.~Karshon.

\begin{thm}\label{thm:slfsp=>fdf}
 Every diffeological vector space in $\cSV$ is in $\cF$.
\end{thm}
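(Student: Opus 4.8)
The plan is to reduce the statement immediately to two facts already established: that $\cSV$ is closed under passing to subspaces, and that for finite-dimensional diffeological vector spaces, membership in $\cSV$ is equivalent to fineness.

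Concretely, I would fix a diffeological vector space $V \in \cSV$ and let $W$ be an arbitrary finite-dimensional subspace of $V$, equipped (as always) with the sub-diffeology. Then $W$ is itself a diffeological vector space, and the inclusion $W \hookrightarrow V$ is a smooth linear injection, so Proposition~\ref{prop:S-sub-(co)product}(1) applies and shows $W \in \cSV$. Since $W$ is finite-dimensional and lies in $\cSV$, Theorem~\ref{thm:fd:fine<=>SV} (the implication $(3)\Rightarrow(1)$ there) shows that $W$ is fine. As $W$ was an arbitrary finite-dimensional subspace of $V$, this is precisely the assertion that $V \in \cF$.

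There is essentially no obstacle in this argument: it is a purely formal combination of Proposition~\ref{prop:S-sub-(co)product}(1) and Theorem~\ref{thm:fd:fine<=>SV}, and the only point requiring a word of justification is the routine observation that a finite-dimensional subspace of $V$, with the sub-diffeology, is a diffeological vector space with smooth linear inclusion into $V$, so that both cited results genuinely apply to it. All of the real content has already been absorbed into the proof of Theorem~\ref{thm:fd:fine<=>SV} — whose nontrivial direction ultimately rests on Proposition~\ref{prop:fine<=>Linf=L} — and into the closure of $\cSV$ under subspaces.
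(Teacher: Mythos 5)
Your proposal is correct and is essentially identical to the paper's own proof: both pass to an arbitrary finite-dimensional subspace, invoke the closure of $\cSV$ under subspaces (Proposition~\ref{prop:S-sub-(co)product}(1)), and then apply Theorem~\ref{thm:fd:fine<=>SV} to conclude fineness. No issues.
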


 This result is a special case of Theorem~\ref{thm:SD=>FDF} below, but we 
 provide a direct proof, since it follows easily from earlier results.

\begin{proof}
 Let $W$ be a finite-dimensional subspace of $V$ with $V \in \cSV$. 
 By Proposition~\ref{prop:S-sub-(co)product}(1), $W$ is in $\cSV$, 
 and then by Theorem~\ref{thm:fd:fine<=>SV}, $W$ is fine.
\end{proof}

\begin{rem}\
 \begin{enumerate}
  \item \label{rem:countable-dimension} Note that it is not in general true that every
  diffeological vector space in $\cSV$ is fine.
  For example, $\prod_\omega \R$ is in $\cSV$ by Remark~\ref{rem:SV>P}(1), but it is not fine.
  In fact,~\cite[Example~5.4]{Wu} showed that there is a countable-dimensional subspace of $\prod_{\omega} \R$ which
  is not fine.
  Incidentally, it follows that $\prod_{\omega} \R$ is not the colimit in $\DVect$ of its finite-dimensional subspaces,
  since fine diffeological vector spaces are closed under colimits (\cite[Property~6 after Definition~5.2]{Wu}).

  \item When $\R$ is equipped with the continuous diffeology (see Example~\ref{ex:H=/=>SD}),
        it is Hausdorff but is not in $\cF$.
        We will see in Proposition~\ref{prop:FFV-not-haus} that the reverse inclusion
        also fails to hold.
 \end{enumerate}
\end{rem}

The main result of this section is the following:

\begin{thm}\label{thm:SD=>FDF}
 Every diffeological vector space in $\cSD$ is in $\cF$.
\end{thm}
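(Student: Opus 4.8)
The plan is to show that if $V\in\cSD$ and $W\subseteq V$ is a finite-dimensional subspace, then $W$ is fine, by invoking the characterization in Theorem~\ref{thm:fd:fine<=>SV}: it suffices to show $W\in\cSV$, i.e.\ that the smooth \emph{linear} functionals on $W$ separate points. The difficulty is that $V\in\cSD$ only gives us smooth (possibly very nonlinear) functionals $V\to\R$, and we must somehow upgrade these to smooth linear functionals on the finite-dimensional piece $W$. So the main work is: given a smooth $f:V\to\R$ with $f(0)=0$ and $f(w_0)\neq 0$ for some fixed $w_0\in W$, produce a smooth \emph{linear} functional $V\to\R$ (or at least $W\to\R$) that is nonzero on $w_0$.

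First I would reduce to the case $W=\R^n$ (as a vector space), choose a linear isomorphism $W\cong\R^n$, and fix $0\neq w_0\in W$; by $\cSD$ there is a smooth $g:V\to\R$ with $g(0)=0$, $g(w_0)\neq 0$. The key step is an \emph{averaging/differentiation} trick: consider the restriction $g|_W:\R^n\to\R$, which is smooth in the ordinary sense, and extract from it a nonzero linear functional on $W$ — for instance some directional derivative $D_v g|_0$, or a suitable average $\int g(tw)\,\varphi(t)\,dt$ over a curve, will be linear on $W$ and, after a generic choice, nonzero somewhere on $W$. The subtlety is that derivatives of $g$ at $0$ need not be nonzero in the direction $w_0$ even if $g(w_0)\neq 0$; so I would instead argue that the collection of all such ``linearizations'' of all smooth functionals cannot simultaneously vanish on a nonzero vector, because then $W$ would carry a diffeology strictly larger than fine while still lying in $\cSD$ — and one uses that on a finite-dimensional space the only linear functionals arising are the genuine linear maps $\R^n\to\R$. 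Concretely, I expect the argument to split into cases (as the excerpt's acknowledgements hint — ``Case 1'' attributed to Chengjie Yu): the case where $W$, with its induced diffeology, already has a nontrivial smooth linear functional, and the harder case where it does not, in which one derives a contradiction by building a plot of $W$ (using the Special Curve Lemma, as in Proposition~\ref{prop:fineness-on-curves}) along which every smooth functional on $V$ is forced to be constant, contradicting $V\in\cSD$.

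The hardest part will be this second case: showing that if the finite-dimensional $W$ admits \emph{no} nonconstant smooth linear functional (equivalently, by Theorem~\ref{thm:fd:fine<=>SV}, $W$ is not fine), then one can manufacture from the non-fineness of $W$ a smooth curve or plot $p:U\to W\hookrightarrow V$ whose image is ``diffeologically connected to $0$'' in a way that forces $f\circ p$ to be constant for every smooth $f:V\to\R$, contradicting separation of points. I would build $p$ from a plot of $W$ witnessing non-fineness — a plot $q:U\to W$ that does not locally factor through a linear injection in a smooth way — rescale and splice it using bump functions to get a plot realizing scalar multiples $tw$ for $t$ in an interval, and then show smooth compatibility forces any smooth $f$ to be constant along it. Once this contradiction is in hand, $W\in\cSV$, hence $W$ is fine by Theorem~\ref{thm:fd:fine<=>SV}, and since $W$ was an arbitrary finite-dimensional subspace, $V\in\cF$.
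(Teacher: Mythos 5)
Your overall reduction is correct and matches the paper's: membership in $\cSD$ passes to subspaces, so it suffices to show that a finite-dimensional diffeological vector space that is not fine cannot be in $\cSD$. But both mechanisms you propose for carrying this out have genuine gaps. First, the ``linearization'' step does not work as described. If $g \in C^\infty(V,\R)$ and $W \cong \R^n$ is a finite-dimensional subspace, then $g|_W$ is an ordinary smooth function $\R^n \to \R$ (the sub-diffeology on $W$ contains the fine, i.e.\ standard, diffeology), and its differential $dg|_0$ is an ordinary linear functional on $W$ --- but there is no reason for $dg|_0$ to be smooth with respect to the given, possibly larger, diffeology on $W$: it is a pointwise limit of the smooth functions $w \mapsto g(tw)/t$, and such limits need not be smooth. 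So this does not produce elements of $L^\infty(W,\R)$, and the route through $W \in \cSV$ and Theorem~\ref{thm:fd:fine<=>SV} is blocked. (The average $\int g(tw)\varphi(t)\,dt$ is not even linear in $w$.) The sentence asserting that the linearizations ``cannot simultaneously vanish on a nonzero vector, because then $W$ would carry a diffeology strictly larger than fine while still lying in $\cSD$'' assumes exactly what is to be proved.

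Second, the substantive content of the theorem lies in a case analysis you do not reach. Starting from Proposition~\ref{prop:fineness-on-curves}, non-fineness yields a plot $p:\R\to V$ with $p(0)=0$ that is not smooth at $0$, and the essential dichotomy is whether $p$ is \emph{continuous} at $0$. If it is, one considers $A=\{\nabla f(x) \mid f\in C^\infty(V,\R),\ x\in V\}$: were $A$ to span $\R^n$, one could assemble a smooth $G:V\to\R^n$ with invertible Jacobian at $0$, and the inverse function theorem together with continuity of $p$ at $0$ would force $p$ to be smooth there, a contradiction; hence $A$ is a proper subspace and every smooth functional is constant in some fixed direction $v\neq 0$, killing separation. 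If $p$ is discontinuous at $0$, one analyses whether some $t^k p(t)$ is bounded near $0$ and, after rescaling by a smooth curve supplied by an extended special curve lemma (Lemma~\ref{le:escl}), obtains a bounded plot $q$ with $q(0)=0$ and a sequence $q(t_i)\to v\neq 0$, whence $f(0)=f(v)$ for every $f\in C^\infty(V,\R)$ by continuity of $f\circ q$. Your sketch gestures at ``building a plot along which every smooth functional is constant,'' which is the right target in the discontinuous case, but the splicing construction you describe (realizing scalar multiples $tw$) is not shown to be a plot and would not by itself force constancy; and the continuous-but-not-smooth case, which requires the gradient-span and inverse-function-theorem argument, is not addressed at all.
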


We defer the proof to Section~\ref{se:SD=>FDF}.

Furthermore, we have the following result:

\begin{prop}\label{prop:FFV-not-haus}
There exists a diffeological vector space which is in $\cF$ but which is not Hausdorff.
In particular, the containment of $\cSD$ in $\cF$ is proper.
\end{prop}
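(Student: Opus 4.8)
The plan is to construct an explicit diffeological vector space in $\cF$ whose $D$-topology is not Hausdorff. The natural way to kill Hausdorffness is to produce a nonzero vector that cannot be separated from $0$ in the $D$-topology, i.e.\ every $D$-open set containing $0$ also contains that vector; equivalently, by Proposition~\ref{prop:SD=>H} it suffices (and is in fact necessary, given $\cF\subseteq\cSD$ would contradict what we want) to produce $V\in\cF\setminus\haus$. A clean source of non-Hausdorff behaviour is a quotient $V = A/B$ where $B$ is a linear subspace of a diffeological vector space $A$ that is not $D$-closed; then the image of any point in $\overline{B}\setminus B$ is not separated from $0$. So the strategy is: pick $A$ with a subspace $B$ such that (i) $B$ is not $D$-closed in $A$, guaranteeing $V=A/B\notin\haus$, and (ii) every finite-dimensional subspace of $V=A/B$ is fine.

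First I would take $A = F(\R)$, the free diffeological vector space on $\R$ (or possibly a countable product/coproduct of lines), and choose $B$ to be the kernel of a suitable smooth linear map, or more concretely a subspace whose $D$-closure is strictly larger. For instance, one can look for a smooth curve $p:\R\to A$ with $p(t)\in B$ for $t\neq 0$ but $p(0)\notin B$; then $p(0)+B$ is in the $D$-closure of $B$ but is a nonzero element of $V$, so $V$ is not Hausdorff. Then I would verify (ii): every finite-dimensional subspace $W\subseteq V$ is fine. The key tool is Proposition~\ref{prop:fineness-on-curves}, fineness can be checked on plots $\R\to W$. Given a plot $p:\R\to W\subseteq V$, lift it locally to a plot $\tilde p:\R\to A$ (using that $A\to V$ is a subduction), use the explicit local form of plots in $F(\R)$ as $u\mapsto \sum r_i(u)[q_i(u)]$, and argue that modulo $B$ the plot $p$ locally lands in and factors smoothly through a finite-dimensional fine subspace. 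Here it helps that $F(\R)$ is projective, hence in $\cSV$, hence in $\cF$ by Theorem~\ref{thm:slfsp=>fdf} — so finite-dimensional subspaces of $A$ are already fine, and the content is transporting this through the quotient.

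The main obstacle I anticipate is precisely step (ii): showing that passing to the quotient $A/B$ does not destroy fineness of finite-dimensional subspaces, while simultaneously keeping the quotient non-Hausdorff. These pull in opposite directions — a large $B$ (needed to make $\overline B\neq B$) risks creating finite-dimensional subspaces of $V$ that receive "bad" plots via lifting, and controlling the local factorization of such plots through a fixed finite-dimensional fine subspace of $V$ requires care. One likely needs to choose $B$ so that the quotient map restricted to each finite-dimensional fine subspace of $A$ remains an induction onto its image, or to exploit Remark~\ref{rem:fineness} (any two linear-injection factorizations of a plot are simultaneously smooth) to reduce checking smoothness of a factorization through $V$ to smoothness of a factorization through $A$. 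A concrete candidate worth trying: let $A=F(\R)$ and $B=\ker(\mathrm{ev}:F(\R)\to C(\R))$ where $\mathrm{ev}$ sends $[t]\mapsto t\in C(\R)$ ($\R$ with the continuous diffeology); then $V\cong$ (image), which is related to $C(\R)$, known to be Hausdorff but not in $\cF$ — so this exact choice fails condition (ii) and one must instead engineer $B$ more delicately, perhaps as a subspace contained in the $D$-closure of $\{0\}$-like behaviour only along a single curve. I would iterate on the construction until both conditions hold, then assemble the proof: verify non-Hausdorffness via the curve witnessing $\overline B\supsetneq B$, and verify the $\cF$ condition via Proposition~\ref{prop:fineness-on-curves} applied to lifted plots. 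The final sentence, that $\cSD\subsetneq\cF$ is proper, is then immediate since $\cSD\subseteq\haus$ by Proposition~\ref{prop:SD=>H}, so the exhibited space lies in $\cF\setminus\cSD$.
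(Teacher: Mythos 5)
Your overall strategy is sensible and your reduction of the final sentence to Proposition~\ref{prop:SD=>H} is correct, but the proposal has a genuine gap: the entire content of this proposition is an explicit example, and you never produce one. You correctly identify the central tension (the mechanism that destroys Hausdorffness must not create non-fine finite-dimensional subspaces), you observe that your one concrete candidate, $B=\ker(F(\R)\to C(\R))$, fails, and you end with the statement that you would ``iterate on the construction until both conditions hold.'' That iteration is precisely where the proof lives, and it is missing; as written, nothing in the proposal certifies that a space satisfying both (i) and (ii) exists.

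For comparison, the paper's example is the vector space $V$ with basis $\{[r]: r\in\R\}$, equipped with the vector space diffeology generated by the two plots $\bar p(x)=[x]$ and $p(x)=[f(x)]$, where $f:\R\to\R$ is a bijection such that $f^{-1}(U)$ is dense in $\R$ for every open neighbourhood $U$ of $0$. Non-Hausdorffness does not come from a curve limiting into a non-$D$-closed subspace, as in your plan, but from the clash of two parametrizations of the same set of basis vectors: any $D$-open set containing $[0]$ pulls back along $p$ to a dense subset of $\R$, so it cannot be disjoint from a $D$-open set containing $[1]$. Membership in $\cF$ is then verified directly from the local normal form of plots, $q(u)=\sum_i\alpha_i(u)\,[f(\beta_i(u))]+\sum_j\bar\alpha_j(u)\,[\bar\beta_j(u)]$: after shrinking the domain, the coefficient of a fixed basis vector $[a]$ in a plot landing in a finite-dimensional span of basis vectors equals a finite sum of the smooth functions $\alpha_i$, hence is smooth. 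This achieves exactly the balance you anticipate but do not realize: $f$ is wild enough (dense preimages) to kill Hausdorffness, yet every plot still locally involves only finitely many smooth coordinate curves $\beta_i$, so finite-dimensional spans stay fine. Without such a concrete construction and its verification, the proposal does not constitute a proof.
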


\begin{proof}
 Let $V$ be the vector space with basis $\R$,
 and for $r \in \R$ write $[r]$ for the corresponding basis vector of $V$.
 Let $f : \R \to \R$ be a bijection such that $f^{-1}(U)$ is dense in $\R$
 for every open neighbourhood $U$ of $0$ in $\R$.
 Define $p: \R \to V$ by $p(x) = [f(x)]$ and $\bar{p} : \R \to V$ by $\bar{p}(x) = [x]$.
 Equip $V$ with the vector space diffeology generated by $p$ and $\bar{p}$.
 In other words, $q: U \to V$ is a plot if and only if for every $u_0 \in U$
 there exist an open neighbourhood $U'$ of $u_0$ in $U$ and
 finitely many smooth functions $\alpha_i, \beta_i, \bar{\alpha}_j, \bar{\beta}_j: U' \to \R$
 such that for any $u \in U'$, 
 \begin{equation}\label{eq:local}
   q(u) = \sum_i \alpha_i(u) \, [f(\beta_i(u))] + \sum_j \bar{\alpha}_j(u) \, [\bar{\beta}_j(u)].
   \tag{$\dagger$}
 \end{equation}
 (In general, one should include terms with smooth multiples of arbitrary
 vectors in $V$, but since $\bar{\beta}_j$ can be constant,
 this case is included in the above.)
 First we show that the $D$-topology on $V$ is not Hausdorff.
 Suppose that $V_0$ and $V_1$ are disjoint $D$-open subsets of $V$
 containing $[0]$ and $[1]$, respectively.
 Then $U_0 := \bar{p}^{-1}(V_0)$ and $U_1 := \bar{p}^{-1}(V_1)$ are open in $\R$.
 Since $p$ and $\bar{p}$ are both bijections onto the subset of basis vectors in $V$,
 it follows that $p^{-1}(V_0) = f^{-1}(U_0)$ and $p^{-1}(V_1) = f^{-1}(U_1)$.
 Therefore, $p^{-1}(V_0)$ is dense in $\R$ and so $p^{-1}(V_1)$, which is contained in the
 complement, must not be open in $\R$, contradicting the assumption that $V_1$ is $D$-open.
 So $V$ is not Hausdorff.

 Next we show that $V$ is in $\cF$.
 It suffices to show that for any finite subset $A \subseteq \R$,
 the subspace $W$ spanned by $A$ has the fine diffeology.
 So let $q : U \to V$ be a plot which lands in $W$.
 We must show that for each $a \in A$, the component $q_a$ of $q$ is
 a smooth function $U \to \R$.
 This is a local property, so we choose $u_0 \in U$ and express $q$ in the form~\eqref{eq:local}.
 It suffices to handle each sum in~\eqref{eq:local} separately, so we begin by
 assuming that $q$ only has terms involving $f$.
 Let $A' = f^{-1}(A)$.
 By shrinking $U'$ if necessary, we can assume that:
 (1) for any $b' \in A'$, if $\beta_i(u_0) \neq b'$, then $\beta_i(u) \neq b'$ for all $u \in U'$;
 and (2) if $\beta_i(u_0) \neq \beta_j(u_0)$, then $\beta_i$ and $\beta_j$
 have disjoint images.
 Since $f$ is a bijection, we can rephrase these conditions as:
 (1') for any $b \in A$, if $f(\beta_i(u_0)) \neq b$, then $f(\beta_i(u)) \neq b$ for all $u \in U'$;
 and (2') if $f(\beta_i(u_0)) \neq f(\beta_j(u_0))$, then $f \circ \beta_i$ and $f \circ \beta_j$
 have disjoint images.
 Condition (1') implies that for $u \in U'$, $q_a(u)$ is the $a$-coefficient of
 \[
   \sum_{f(\beta_i(u_0)) = a} \alpha_i(u) \, [f(\beta_i(u))].
 \]
 Since $q(u)$ is in $W$, condition (2') implies that for $r \in \R \setminus A$, we must have
 \stepcounter{equation} 
 \begin{equation}\label{eq:sum}
   \sum_{f(\beta_i(u_0)) = a, f(\beta_i(u)) = r} \alpha_i(u) = 0.
   \tag{$\diamond$}
 \end{equation}
 And condition (1') implies that~\eqref{eq:sum} also holds for $r \in A \setminus \{a\}$,
 since the sum is empty in that case.
 Therefore, $q_a(u)$ can be expressed as
 \[
   \sum_{f(\beta_i(u_0)) = a} \alpha_i(u),
 \]
 which is a smooth function of $u \in U'$.

 The other sum in~\eqref{eq:local} is handled in a similar way, replacing $f$ by the
 identity function throughout.

 Finally, Proposition~\ref{prop:SD=>H} implies that $V$ is not in $\cSD$,
 giving the last claim.
\end{proof}

 Next we observe that if $V$ is projective (and hence in $\cSV$ and $\cSD$),
 it does not follow that all countable-dimensional subspaces of $V$ are fine.
 We will illustrate this with $V = F(\R)$.
 By~\cite[Corollary~6.4]{Wu}, $F(\R)$ is projective.

\begin{prop}\label{prop:cdpdvs}
 Let $A$ be a subset of $\R$, and let $V$ be the subspace of $F(\R)$ spanned by $A$. 
 Then $V$ is fine if and only if $A$ has no accumulation point in $\R$.
\end{prop}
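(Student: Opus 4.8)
The plan is to prove both directions by leveraging the explicit description of plots in $F(\R)$: every plot is locally of the form $u \mapsto \sum_{i=1}^k r_i(u)[p_i(u)]$ for smooth $r_i : U \to \R$ and $p_i : U \to \R$. For the direction ($\Leftarrow$), suppose $A$ has no accumulation point in $\R$. I would show that the subspace $V$ spanned by $A$ is fine by verifying the criterion of Proposition~\ref{prop:fineness-on-curves}: given a plot $q : \R \to V$ and $x_0 \in \R$, I want a neighbourhood on which $q$ factors smoothly through a linear injection from some $\R^n$. Write $q$ locally as $\sum_i r_i(u)[p_i(u)]$ with all $p_i$ landing in $A$ (they must, since $q$ lands in $V$ and the $[a]$ for $a \in A$ form a basis, although one must be slightly careful: the $p_i(u)$ need not individually lie in $A$, but the combination does — so I would first argue that after shrinking and regrouping, one can take the $p_i$ to be locally constant, equal to points of $A$). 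The key point is that because $A$ has no accumulation point, each smooth $p_i$ that is forced to take values in $A$ near $x_0$ must be constant near $x_0$ (a continuous function into a discrete-in-$\R$ set is locally constant). Hence near $x_0$, $q(u) = \sum_{a} \rho_a(u)[a]$ for finitely many $a \in A$ and smooth $\rho_a$, which exhibits $q$ as factoring smoothly through the finite-dimensional (fine) subspace spanned by those $a$; so $V$ is fine.

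For the direction ($\Rightarrow$), I argue the contrapositive: if $A$ has an accumulation point $t \in \R$, I exhibit a plot of $V$ that is not a plot of the fine diffeology, so $V$ is not fine. Choose a sequence $a_j \in A$ with $a_j \to t$, $a_j \ne t$, and (passing to a subsequence) converging fast in the sense of Kriegl–Michor; by the Special Curve Lemma there is a smooth $f : \R \to \R$ with $f(1/j) = a_j$ and $f(0) = t$. Then — modeled on the construction in~\cite[Example~5.4]{Wu} and on Proposition~\ref{prop:fine<=>Linf=L} — I would build a plot $p : \R \to F(\R)$ of the form $u \mapsto \lambda(u)\,[f(u)]$ for a suitable smooth bump-type function $\lambda$, or more robustly a plot supported on the basis vectors $[a_j]$, so that its image meets infinitely many linearly independent basis vectors in every neighbourhood of $0$. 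Such a plot lands in $V$ but cannot locally factor through any linear injection $\R^n \to V$ near $0$, since its values near $0$ span an infinite-dimensional space; hence $V$ is not fine. (Alternatively, one can invoke Proposition~\ref{prop:fine<=>Linf=L}: construct a linear functional on $V$ sending $[a_j] \mapsto 1$ for odd $j$ and $\mapsto 0$ for even $j$ — extended arbitrarily on a complement — and show it is not smooth by composing with the plot just built.)

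The main obstacle I anticipate is in the ($\Leftarrow$) direction: carefully handling the fact that in the local expression $\sum_i r_i(u)[p_i(u)]$, the individual curves $p_i$ are \emph{a priori} arbitrary smooth curves in $\R$, not curves into $A$, even though the total sum lies in $V$. Resolving this requires an argument — analogous to the "shrink $U'$ and regroup" manipulations in the proof of Proposition~\ref{prop:FFV-not-haus}, using conditions of the type "if $p_i(u_0) \ne p_j(u_0)$ then $p_i, p_j$ have disjoint images near $u_0$" — to show that after shrinking, only those $p_i$ with $p_i(u_0) \in A$ contribute, and those are locally constant by the no-accumulation-point hypothesis. The rest of both directions is then routine given the cited results.
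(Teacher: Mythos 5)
Your forward direction matches the paper's: an accumulation point $a_0$, a fast-converging sequence $a_n \to a_0$, and a plot of the form $u \mapsto \lambda(u)[g(u)]$ that meets infinitely many basis vectors near $0$. One detail you gloss over: for such a plot to land in $V$ you need $\lambda$ to vanish wherever $g$ leaves $A$, so the paper takes $g$ constant equal to $a_n$ on each interval $(1/(2n+1),1/(2n))$ and supports $\lambda$ exactly on those intervals; your phrase ``suitable smooth bump-type function'' needs to be cashed out in this way, but the idea is the same.

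The reverse direction is where there is a genuine gap. Your plan rests on the claim that, after shrinking and regrouping, the curves $p_i$ in the local expression $\sum_i r_i(u)[p_i(u)]$ can be taken locally constant with values in $A$, ``since a continuous function into a discrete-in-$\R$ set is locally constant.'' But the $p_i$ are not forced to take values in $A$ at all, not even those with $p_i(u_0)\in A$: the plot can involve curves $p_i$ that wander off $A$, with the coefficients of the off-$A$ basis vectors cancelling in the sum. Indeed the plot $u\mapsto \lambda(u)[g(u)]$ you build in the other direction is exactly of this type ($g$ is not locally constant; it only agrees with elements of $A$ where $\lambda\neq 0$). So local constancy of the $p_i$ is simply false, and the ``finitely many $a$'' conclusion does not follow from it. A regrouping argument in the style of Proposition~\ref{prop:FFV-not-haus} can be pushed through, but the correct statement is about cancellation of coefficients, not constancy of the $p_i$, and it takes real work. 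The paper sidesteps all of this: since $A$ has no accumulation point, for each $a\in A$ there is a smooth bump function $\phi_a:\R\to\R$ equal to $1$ at $a$ and $0$ on $A\setminus\{a\}$, inducing a smooth linear functional $\tilde\phi_a:F(\R)\to\R$; composing with the plot shows each coefficient function $h_a$ is smooth with no regrouping at all. Local finite-dimensionality is then obtained separately by shrinking the domain into a compact set, so that the images of the finitely many $g_j$ lie in a compact subset of $\R$, which meets $A$ in only finitely many points. You would need to either adopt this argument or substantially repair the regrouping step before your proof of this direction is complete.
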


 For example, $F(\R)$ is not fine.
 As a more interesting example, $A = \{ 1/n \mid n = 1, 2, \ldots \}$ spans a countable-dimensional
 subspace $V$ of $F(\R)$ which is not fine.
 It will follow from Proposition~\ref{prop:F(countable)} that $V$ is not free on any diffeological space.

\begin{proof}
 ($\Leftarrow$) Let $p: U \to V$ be a plot, where $U$ is open in some $\R^n$.
 Since $V$ is the span of $A$, there exist unique functions $h_a : U \to \R$
 such that $p(x) = \sum_{a \in A} h_a(x) [a]$.
 Since $A$ has no accumulation point in $\R$, for each $a$ in $A$ there
 exists a smooth bump function $\phi_a : \R \to \R$ which takes the value $1$ at $a$
 and $0$ at every other element of $A$.
 Associated to $\phi_a$ is a smooth linear map $\tilde{\phi}_a : F(\R) \to \R$ which
 sends $[a]$ to $1$ and all other basis elements from $A$ to $0$.
 Then $h_a = \tilde{\phi}_a \circ p$, which shows that $h_a$ is smooth.

 Next we show that locally $p$ factors through the span of a finite subset of $A$.
 Fix $u \in U$.
 As $V$ is a subspace of $F(\R)$, there is an open neighbourhood $U'$ of $u$ in $U$
 such that $p(x) = \sum_{j=1}^m f_j(x) [g_j(x)]$ for $x \in U'$,
 where $f_j$ and $g_j$ are smooth functions $U' \to \R$.
 Shrinking $U'$ if necessary, we can assume that it is contained in a compact subset of $U$.
 It follows that the image of each $g_j$ is contained in a compact subset of $\R$
 and therefore intersects only finitely many points of $A$.
 Since there are only finitely many $g_j$'s, $p|_{U'}$ factors through
 the span of $A'$ for some finite subset $A'$ of $A$.
 That is, $h_a(x) = 0$ for all $x \in U'$ and all $a \in A \setminus A'$.

 In summary, identifying the span of $A'$ with $\R^{A'}$, we have factored $p|_{U'}$
 as $U' \to \R^{A'} \to V$, where the first map is $x \mapsto (h_a(x))_{a \in A'}$ and the
 second map sends $f : A' \to \R$ to $\sum_{a \in A'} f(a)[a]$.

 ($\Rightarrow$) Now we prove that if $A$ has an accumulation point $a_0$ in $\R$, then $V$ is not fine.
 Pick a sequence $(a_i)$ in $A \setminus \{a_0\}$ that converges fast to $a_0$.
 Choose a smooth function $f : \R \to \R$ such that $f(x) \neq 0$ for $1/(2n+1) < x < 1/2n$ 
 for each $n \in \Z^+$, and $f(x) = 0$ for all other $x$.
 Choose another smooth function $g : \R \to \R$ such that $g(x) = a_n$ for $1/(2n+1) < x < 1/2n$
 for each $n \in \Z^+$, with no constraints on $g$ otherwise.
 It will necessarily be the case that $g(0) = a_0$,
 and such a smooth $g$ exists because the sequence was chosen to converge fast.
 Then the function $p : \R \to V$ defined by $p(x) = f(x)[g(x)]$ is smooth,
 but there is no open neighbourhood $U$ of $0$ so that $p|_U$ factors through
 a finite-dimensional subspace of $V$.
\end{proof}

 On the other hand, we have:

\begin{prop}\label{prop:F(countable)}
 Let $X$ be a diffeological space whose underlying set has cardinality less than
 the cardinality of $\R$.
 Then the following are equivalent:
 \begin{enumerate}
 \item $X$ is discrete.
 \item $F(X)$ is fine.
 \item $F(X)$ is projective.
 \item $F(X)$ is in $\cSV$.
 \item $F(X)$ is in $\cSD$.
 \item $F(X)$ is Hausdorff.
 \end{enumerate}
\end{prop}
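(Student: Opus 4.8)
\section*{Proof proposal}

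\textbf{Overall strategy.}
The plan is to put the six conditions into a single cycle of implications, almost all of which are already available, and to supply the one new link using the hypothesis $|X| < |\R|$. Indeed, $(1)\Leftrightarrow(2)$ is \cite[Theorem~5.3]{Wu} (recalled in Example~\ref{ex:projective}), $(2)\Rightarrow(3)$ is Proposition~\ref{prop:fine=>PV}, $(3)\Rightarrow(4)$ is Proposition~\ref{prop:PV=>SV}, $(4)\Rightarrow(5)$ holds because $\cSV\subseteq\cSD$, and $(5)\Rightarrow(6)$ is Proposition~\ref{prop:SD=>H}. So it remains to prove $(6)\Rightarrow(1)$, and this is the only step that uses the cardinality bound. (Note that, by Proposition~\ref{prop:S-free}, conditions $(4)$ and $(5)$ are each equivalent to ``$X\in\cSD'$'', so what is really being asserted is that, for small $X$, Hausdorffness of $F(X)$ already forces $X$ to be discrete.)

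\textbf{The new step $(6)\Rightarrow(1)$.}
I would prove the contrapositive: if $X$ is not discrete and $|X|<|\R|$, then $F(X)$ is not Hausdorff. The first move is to show that the $D$-topology of $X$ itself is not Hausdorff. Since $X$ is not discrete, some plot of $X$ fails to be locally constant; restricting it to a short line segment through the relevant point (along which it is non-constant) and reparametrising by a smooth function $\R\to[0,1]$ produces a smooth curve $\gamma:\R\to X$ with $\gamma(0)\neq\gamma(1)$. Put $K:=\gamma([0,1])$ with the subspace topology inherited from the $D$-topology of $X$. Because smooth maps are $D$-continuous, $K$ is compact and connected, and $|K|\geq 2$. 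If the $D$-topology of $X$ were Hausdorff, then $K$ would be compact Hausdorff, hence normal, so Urysohn's lemma would yield a continuous map $K\to[0,1]$ taking both values $0$ and $1$; its image would be a connected subset of $[0,1]$ containing $0$ and $1$, hence all of $[0,1]$, and therefore $|X|\geq|K|\geq|\R|$, contradicting the hypothesis. Hence the $D$-topology of $X$ is not Hausdorff: there are $a\neq b$ in $X$ with no disjoint $D$-open neighbourhoods.

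\textbf{Transfer to $F(X)$ and the main obstacle.}
Finally I would push this up to $F(X)$. The canonical map $j:X\to F(X)$, $x\mapsto[x]$, is smooth, hence $D$-continuous; so if $A$ and $B$ were disjoint $D$-open subsets of $F(X)$ containing $[a]$ and $[b]$, then $j^{-1}(A)$ and $j^{-1}(B)$ would be disjoint $D$-open subsets of $X$ containing $a$ and $b$, which is impossible. Thus $F(X)$ is not Hausdorff, completing the cycle. I expect the only real obstacle to be the middle step: one has to recognise that non-discreteness of $X$, \emph{together with the bound $|X|<|\R|$}, forces the $D$-topology of $X$ to fail Hausdorffness — the point being that a non-constant smooth curve would otherwise exhibit inside $X$ a nondegenerate compact connected Hausdorff space, and every such space has at least $|\R|$ points. (Once this is seen, everything else is formal; incidentally, the same ``non-degenerate image'' idea also gives a direct proof of $(5)\Rightarrow(1)$ via Proposition~\ref{prop:S-free}, by composing $\gamma$ with a smooth functional on $X$ that is non-constant on it, but routing through $(6)$ is more economical.)
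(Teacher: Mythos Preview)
Your proposal is correct and takes essentially the same approach as the paper: both reduce to showing that a Hausdorff diffeological space of cardinality less than $|\R|$ must be discrete, and both prove this by noting that a non-constant plot yields a compact connected subset of $X$ which, if Hausdorff, admits via Urysohn's lemma a continuous surjection onto an interval, forcing $|X|\geq|\R|$. The only differences are cosmetic --- you argue the key step by contrapositive and cite \cite[Theorem~5.3]{Wu} for $(1)\Leftrightarrow(2)$, whereas the paper argues directly and observes $(1)\Rightarrow(2)$ by hand --- but the mathematical content is the same.
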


\begin{proof}
 That (1) $\implies$ (2) is straightforward.
 The implications (2) $\implies$ (3) $\implies$ (4) and (5) $\implies$ (6) 
 follow from Propositions~\ref{prop:fine=>PV}, \ref{prop:PV=>SV} and~\ref{prop:SD=>H},
 while (4) $\implies$ (5) is clear.
 None of these use the assumption on the cardinality of $X$.

 It remains to prove that (6) $\implies$ (1).  
 Since the natural injective map $X \to F(X)$ is smooth, it is also continuous
 when $X$ and $F(X)$ are both equipped with the D-topology.
 Therefore, $X$ is Hausdorff.
 We must show that the diffeology on $X$ is discrete.
 Let $p: U \to X$ be a plot from a connected open subset $U$ of a Euclidean space.
 We will show that $p$ is constant.
 If not, then the image of $p$ contains two distinct points $x,x' \in X$ which 
 are connected by a continuous path $q : [0,1] \to X$.
 The image of $q$ is compact Hausdorff, and therefore normal.
 So by Urysohn's lemma, there is a continuous map $l : \im(q) \to \R$ which separates $x$ and $x'$.
 Hence, the image of the composite $l \circ q : [0,1] \to \im(q) \to \R$ has cardinality
 equal to the cardinality of $\R$,
 which is a contradiction, since $\im(q) \subseteq X$ has cardinality less than
 the cardinality of $\R$.
\end{proof}

Part of the above proof is based on the argument in~\cite{Ha}.
Note that we have proved that every Hausdorff diffeological space with cardinality less
than the cardinality of $\R$ is discrete.
The implication (2) $\implies$ (1) is also proved in~\cite[Theorem~5.3]{Wu},
without a constraint on the cardinality of $X$.

\subsection{Diffeologies determined by smooth linear functionals}

\begin{de}\label{de:DV}
 The diffeology on a diffeological vector space $V$ is \dfn{determined by its smooth linear functionals}
 if $p : U \to V$ is a plot if and only if $l \circ p$ is smooth for every $l \in L^{\infty}(V, \R)$.
 Write $\cDV$ for the collection of all such diffeological vector spaces.
\end{de}

 Note that any vector space with the indiscrete diffeology is in $\cDV$.
 It follows that being in $\cDV$ does not imply any of the other conditions
 we have studied.

 Also note that every diffeological vector space $V$ in $\cDV$ is \dfn{Fr\"olicher}:
 $p : U \to V$ is a plot if and only if $f \circ p$ is smooth for every $f \in C^{\infty}(V, \R)$.
 We do not know if the converse holds.

 We will see in Proposition~\ref{prop:DV=>equiv} that for diffeological vector spaces in $\cDV$,
 the converse of Theorem~\ref{thm:slfsp=>fdf} holds.
 For this, we need the following results.

\begin{lem}\
 \begin{enumerate}
  \item If $V$ is in $\cDV$ and $W$ is a subspace of $V$, then $W$ is in $\cDV$.
  \item Let $\{V_i\}$ be a set of diffeological vector spaces. Then each $V_i$ is in $\cDV$ 
   if and only if $\prod V_i$ is in $\cDV$.
 \end{enumerate}
\end{lem}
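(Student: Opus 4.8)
The plan is to reduce both statements to the defining property of $\cDV$ together with one elementary observation: the structural maps of a subspace or a product (the inclusion $W \hookrightarrow V$, and the projections $\pi_i$ and canonical injections for $\prod_i V_i$) are smooth and linear, so precomposing a smooth linear functional on one space with such a map yields a smooth linear functional on the other. Everything else is bookkeeping about which diffeology lives on which space.

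For part~(1): the implication ``$p$ a plot of $W$ $\Rightarrow$ $\ell\circ p$ smooth for all $\ell\in L^{\infty}(W,\R)$'' holds for \emph{any} diffeological vector space, since a smooth map composed with a plot is a plot of $\R$. For the converse, suppose $p:U\to W$ has $\ell\circ p$ smooth for every $\ell\in L^{\infty}(W,\R)$. By definition of the sub-diffeology it suffices to show that $i\circ p:U\to V$ is a plot of $V$, where $i:W\hookrightarrow V$ is the inclusion; since $V\in\cDV$, this amounts to checking that $m\circ i\circ p$ is smooth for every $m\in L^{\infty}(V,\R)$. But $m\circ i\in L^{\infty}(W,\R)$, so $(m\circ i)\circ p$ is smooth by hypothesis. (Note we never need smooth linear functionals on $W$ to extend to $V$.)

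For part~(2): if $\prod_i V_i\in\cDV$, then each $V_j$ is isomorphic, as a diffeological vector space, to the subspace $\{(x_i)\mid x_i=0\text{ for }i\neq j\}$ of $\prod_i V_i$ with its sub-diffeology, because the canonical injection $V_j\to\prod_i V_i$ is an induction (immediate from the description of plots in a product: its components are $q$ in slot $j$ and constant $0$ elsewhere). Since $\cDV$ is clearly invariant under isomorphism, part~(1) gives $V_j\in\cDV$. Conversely, assume each $V_i\in\cDV$ and let $p:U\to\prod_i V_i$ satisfy $\ell\circ p$ smooth for all $\ell\in L^{\infty}(\prod_i V_i,\R)$. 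By definition of the product diffeology, $p$ is a plot iff each component $p_i:=\pi_i\circ p$ is a plot of $V_i$; since $V_i\in\cDV$, it suffices to check $m\circ p_i$ is smooth for every $m\in L^{\infty}(V_i,\R)$, and this holds because $m\circ\pi_i\in L^{\infty}(\prod_i V_i,\R)$, so $(m\circ\pi_i)\circ p$ is smooth by hypothesis. There is no real obstacle here; the only point requiring (routine) care is verifying that the canonical injection into a product is an induction, so that part~(2) genuinely follows from part~(1).
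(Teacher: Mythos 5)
Your argument is correct and is exactly the routine verification the paper has in mind when it dismisses this lemma with ``This is straightforward'': both directions reduce to the fact that precomposition with the (smooth, linear) inclusion or projections sends smooth linear functionals to smooth linear functionals, plus the observation that the canonical injection into a product is an induction. Nothing is missing.
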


Since the category $\DVect$ is an additive category, (2) also implies that $\cDV$ is closed under taking finite direct sums.

\begin{proof}
 This is straightforward.
\end{proof}

\begin{prop}\label{prop:dv=>indiscrete+sv}
 Let $V$ be a diffeological vector space.
 Then $V$ is in $\cDV$ if and only if $V$ can be written as a direct sum
 $V \cong W_0 \oplus W_1$ of diffeological vector spaces,
 where $W_0$ is indiscrete and $W_1$ is in $\cSV \cap \cDV$.
\end{prop}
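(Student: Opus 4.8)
The plan is to prove the two implications separately, with essentially all of the work in the ``only if'' direction, where I will exhibit an explicit choice of $W_0$.

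For the ``if'' direction, suppose $V \cong W_0 \oplus W_1$ in $\DVect$ with $W_0$ indiscrete and $W_1 \in \cSV \cap \cDV$; I want to conclude $V \in \cDV$. First I would observe that $L^\infty(W_0, \R) = 0$: given a smooth linear $l : W_0 \to \R$ and $v \in W_0$, composing $l$ with the map $t \mapsto h(t)\, v$ for a non-smooth $h : \R \to \R$ — which is automatically a plot since $W_0$ is indiscrete — forces $l(v) = 0$. Since the underlying diffeological space of $V$ is $W_0 \times W_1$ with the product diffeology, a linear functional on $V$ is smooth iff its two restrictions to $W_0$ and $W_1$ are, so the projection $\pi_1 : V \to W_1$ carries $L^\infty(V,\R)$ bijectively onto $L^\infty(W_1,\R)$. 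Now given $p = (p_0, p_1) : U \to W_0 \times W_1$, the component $p_0$ is automatically a plot of $W_0$, so $p$ is a plot of $V$ iff $p_1$ is a plot of $W_1$; and since $W_1 \in \cDV$ this holds iff $l \circ p_1$ is smooth for every $l \in L^\infty(W_1,\R)$, equivalently iff $l \circ p$ is smooth for every $l \in L^\infty(V,\R)$. Hence $V \in \cDV$.

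For the ``only if'' direction, assume $V \in \cDV$ and set $W_0 := \bigcap_{l \in L^\infty(V,\R)} \ker l$, equipped with the sub-diffeology. The key step is to check that $W_0$ is indiscrete: any function $q : U \to W_0$ satisfies $l \circ (\iota_0 \circ q) = 0$ for every $l \in L^\infty(V,\R)$, so $\iota_0 \circ q$ is a plot of $V$ by the defining property of $\cDV$, and therefore $q$ is a plot of $W_0$. Next I would pick a vector-space complement $W_1$ of $W_0$ in $V$ and give it the sub-diffeology. Then $W_1 \in \cDV$ because subspaces of spaces in $\cDV$ lie in $\cDV$ (the preceding Lemma), and $W_1 \in \cSV$ because any nonzero $w \in W_1$ lies outside $W_0$, hence is not killed by some $l \in L^\infty(V,\R)$ whose restriction to $W_1$ is a smooth linear functional separating $w$ from $0$.

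Finally I would check that the algebraic splitting $V = W_0 \oplus W_1$ is a splitting in $\DVect$, i.e., that the linear bijection $(\pi_0, \pi_1) : V \to W_0 \times W_1$ is a diffeomorphism. Its inverse $(w_0, w_1) \mapsto w_0 + w_1$ is smooth because the inclusions $W_0, W_1 \hookrightarrow V$ and addition on $V$ are smooth. For the forward map, $\pi_0 : V \to W_0$ is smooth simply because $W_0$ is indiscrete, and $\pi_1 : V \to W_1$ is then smooth because $\iota_1 \circ \pi_1 = \mathrm{id}_V - \iota_0 \circ \pi_0$ is a smooth self-map of $V$ and $W_1$ carries the sub-diffeology. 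I expect this last point to be the only genuine subtlety: the whole argument hinges on the observation that indiscreteness of $W_0$ is precisely what makes $\pi_0$, and hence the splitting, smooth. The rest is routine bookkeeping with sub- and product diffeologies and the definitions of $\cSV$ and $\cDV$.
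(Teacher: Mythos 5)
Your proof is correct, and it follows the paper's strategy in its essential point: taking $W_0 = \bigcap_{l \in L^\infty(V,\R)} \ker l$ and observing that membership in $\cDV$ forces the sub-diffeology on $W_0$ to be indiscrete. The one place you diverge is in how you produce $W_1$ and the splitting: the paper takes $W_1$ to be the quotient $V/W_0$ with the quotient diffeology and then invokes a separately stated lemma (that a short exact sequence with indiscrete kernel splits smoothly), whereas you take a linear complement of $W_0$ inside $V$ with the sub-diffeology and verify the splitting by hand --- $\pi_0$ is smooth because $W_0$ is indiscrete, and $\pi_1$ is smooth because $\iota_1 \circ \pi_1 = \mathrm{id}_V - \iota_0 \circ \pi_0$ is smooth and $W_1$ carries the sub-diffeology. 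The two routes are equivalent (once the sequence splits, the complement with the sub-diffeology is isomorphic to the quotient with the quotient diffeology), and your version is self-contained, while the paper's factors the splitting argument into a reusable lemma. Your direct verification of the ``if'' direction, via the identification of $L^\infty(V,\R)$ with $L^\infty(W_1,\R)$ after noting $L^\infty(W_0,\R)=0$, likewise just inlines the closure properties of $\cDV$ under products that the paper cites; all the individual checks (that $W_1$ inherits $\cDV$ as a subspace, and lands in $\cSV$ because nonzero elements of the complement escape $W_0$) are sound.
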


\begin{proof}
 Given $V$ in $\cDV$, let $W_0$ be $\cap_{l \in L^\infty(V,\R)} \ker(l)$ with the sub-diffeology.
 Since $L^\infty(V,\R)$ determines the diffeology on $V$, $W_0$ is indiscrete.
 Let $W_1$ be the quotient $V/W_0$, with the quotient diffeology.
 By the next lemma, we have $V \cong W_0 \oplus W_1$ as diffeological vector spaces.
 If $v + W_0$ is a non-zero element of $W_1$, then $v \not\in W_0$,
 so there is a smooth linear functional $l : V \to \R$ such that $l(v) \neq 0$.
 This $l$ factors through $W_1$, so it follows that $W_1$ is in $\cSV$.
 By the previous lemma, we know that $W_1 \in \cDV$, and hence $W_1 \in \cSV \cap \cDV$.

 The converse follows from the previous lemma and the comment after Definition~\ref{de:DV}.
\end{proof}

\begin{de}
Following \cite[Definition~3.15]{Wu}, a diagram
 \[
   0 \lra W_0 \llra{i} V \llra{p} W_1 \lra 0
 \]
of diffeological vector spaces is a \dfn{short exact sequence of diffeological vector spaces}
if it is a short exact sequence of vector spaces, $i$ is an induction, and $p$ is a subduction.
\end{de}

\begin{lem}
 Let
 \[
   0 \lra W_0 \llra{i} V \llra{p} W_1 \lra 0
 \]
 be a short exact sequence of diffeological vector spaces.
 If $W_0$ is indiscrete, then the sequence splits smoothly, so that
 $V \cong W_0 \oplus W_1$ as diffeological vector spaces.
\end{lem}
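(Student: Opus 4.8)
The plan is to combine the purely algebraic splitting of a short exact sequence of vector spaces with the observation that \emph{any} linear map into an indiscrete diffeological vector space is automatically smooth. First I would fix an algebraic splitting. Since the given sequence is exact as a sequence of vector spaces, I can choose a linear complement $C$ of $\ker p = i(W_0)$ in $V$, let $s : W_1 \to V$ be the inverse of the linear isomorphism $p|_C : C \to W_1$, and let $r : V \to W_0$ be $i^{-1}$ composed with the projection onto $i(W_0)$ along $C$. These satisfy $r \circ i = \mathrm{id}_{W_0}$, $p \circ s = \mathrm{id}_{W_1}$, and the key identity $i \circ r + s \circ p = \mathrm{id}_V$. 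The map $r$ is smooth for a trivial reason: $W_0$ carries the indiscrete diffeology, so every function with codomain $W_0$ is a plot. Thus $i$ already admits a smooth linear retraction.

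The one step with real content is showing that $s : W_1 \to V$ is smooth. Here I would use that $p$ is a subduction: given a plot $q : U \to W_1$, every point of $U$ has a neighbourhood $U'$ over which $q$ lifts to a plot $\tilde q : U' \to V$ with $p \circ \tilde q = q|_{U'}$. Rewriting the identity above as $s \circ p = \mathrm{id}_V - i \circ r$, I get $s \circ q|_{U'} = s \circ p \circ \tilde q = \tilde q - i \circ r \circ \tilde q$. Now $\tilde q$ is a plot of $V$; $r \circ \tilde q$ is a plot of $W_0$ (any map into an indiscrete space is a plot), hence $i \circ r \circ \tilde q$ is a plot of $V$ since $i$ is smooth; and the difference of two plots of $V$ is a plot of $V$ because addition and scalar multiplication on $V$ are smooth. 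So $s \circ q$ is locally, hence globally, a plot, and $s$ is smooth.

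Finally I would assemble the diffeomorphism. The linear maps $\phi := (r,p) : V \to W_0 \oplus W_1$ and $\psi : W_0 \oplus W_1 \to V$, $(w_0,w_1) \mapsto i(w_0) + s(w_1)$, are mutually inverse by the algebraic identities, and both are smooth: $\phi$ since $r$ and $p$ are smooth, $\psi$ since $i$ and $s$ are smooth and addition on $V$ is smooth. Hence $\phi$ is an isomorphism in $\DVect$ carrying $i$ to the canonical inclusion and $p$ to the canonical projection, which is precisely the claim that the sequence splits smoothly and $V \cong W_0 \oplus W_1$ as diffeological vector spaces.

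The main (really the only) obstacle is the smoothness of $s$; everything else is formal. The crucial inputs there are that local lifts along the subduction $p$ exist, that the resulting "error term" $i \circ r \circ \tilde q$ becomes a plot because $r$ factors through the indiscrete space $W_0$, and that $V$'s vector space operations are smooth so plots of $V$ can be subtracted.
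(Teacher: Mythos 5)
Your proposal is correct and follows essentially the same route as the paper: choose a linear retraction onto $W_0$ (smooth for free since $W_0$ is indiscrete), observe that $\mathrm{id}_V - i\circ r$ kills $i(W_0)$ and hence descends along the subduction $p$ to a smooth linear section of $p$, and assemble the resulting isomorphism $V \cong W_0 \oplus W_1$. The only difference is that the paper simply asserts the smooth factorization through $p$, whereas you spell it out via local lifts of plots; that unpacking is accurate and complete.
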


\begin{proof}
 Let $q : V \to W_0$ be any linear function such that $q \circ i = 1_{W_0}$.
 Since $W_0$ is indiscrete, $q$ is smooth.
 Let $k : V \to V$ be the smooth linear map sending $v$ to $v - i(q(v))$.
 Then $k \circ i = 0$, so $k$ factors as $j \circ p$, where $j : W_1 \to V$ is smooth and linear.
 The smooth bijection $V \to W_0 \oplus W_1$ sending $v$ to $(q(v), p(v))$ has a smooth inverse
 sending $(w_0, w_1)$ to $i(w_0) + j(w_1)$, so the claim follows.
\end{proof}

 It follows that many properties of a diffeological vector space are equivalent in this setting:

\begin{prop}\label{prop:DV=>equiv}
 Let $V$ be in $\cDV$.
 Then the following are equivalent:
 \begin{enumerate}
 \item $V$ is in $\cSV$.
 \item $V$ is in $\cSD$.
 \item $V$ is in $\cF$.
 \item $D(V)$ is Hausdorff.
 \item $V$ has no non-zero indiscrete subspace.
 \end{enumerate}
 Moreover, being in $\cDV$ and satisfying one of these conditions
 is equivalent to being a subspace of a product of copies of $\R$.
\end{prop}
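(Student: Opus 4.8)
The plan is to make condition~(5) the hub of the argument: I will verify that each of (1)--(4) implies (5), that (5) implies (1), and that the remaining implications needed to fuse all five into one equivalence class are either immediate or already available. The driving observation is that a single non-zero indiscrete subspace $W\subseteq V$ violates every one of (1)--(4) at once. First, any smooth function $V\to\R$ restricts to a smooth, hence constant, function on the indiscrete space $W$ --- a non-constant restriction would become discontinuous after precomposition with the plot $\R\to W$ taking two separating values on $\Q$ and on its complement --- so smooth (and a fortiori smooth linear) functionals cannot separate $0$ from a non-zero vector of $W$, contradicting (2) and (1). Second, a line in $W$ carries the indiscrete diffeology (a sub-diffeology of a sub-diffeology), which is strictly coarser than the standard, i.e.\ fine, diffeology of a $1$-dimensional space, so it is not fine, contradicting (3). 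Third, for distinct $v,w\in W$ the function $\R\to V$ equal to $v$ on $\Q$ and to $w$ off $\Q$ is a plot of $V$ (it is a plot of $W$, which is indiscrete, composed with the smooth inclusion $W\hookrightarrow V$), and the preimage under it of any $D$-open set containing $v$ but not $w$ is $\Q$, which is not open; hence no two disjoint $D$-open subsets of $V$ separate $v$ and $w$, contradicting (4).

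For $(5)\Rightarrow(1)$ I would invoke Proposition~\ref{prop:dv=>indiscrete+sv}: because $V\in\cDV$, it splits as $V\cong W_0\oplus W_1$ with $W_0$ indiscrete and $W_1\in\cSV$; condition (5) forces the indiscrete subspace $W_0$ to be $0$, so $V\cong W_1\in\cSV$. Combining this with the previous paragraph and the inclusions $\cSV\subseteq\cSD$, $\cSV\subseteq\cF$ (Theorem~\ref{thm:slfsp=>fdf}) and $\cSD\subseteq\haus$ (Proposition~\ref{prop:SD=>H}) yields $(1)\Rightarrow(2)\Rightarrow(5)\Rightarrow(1)$, $(1)\Rightarrow(3)\Rightarrow(5)\Rightarrow(1)$ and $(1)\Rightarrow(4)\Rightarrow(5)\Rightarrow(1)$, so all five conditions are equivalent. (This routing avoids the deferred Theorem~\ref{thm:SD=>FDF}.)

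For the ``moreover'' clause, the backward direction is soft. Each copy of $\R$ is fine, hence in $\cDV$ (using $\fine\subset\cDV$) and in $\cSV$; by the lemma preceding Proposition~\ref{prop:dv=>indiscrete+sv} and Proposition~\ref{prop:S-sub-(co)product}(2), a product of copies of $\R$ is in $\cDV\cap\cSV$; and both classes pass to subspaces (the same lemma and Proposition~\ref{prop:S-sub-(co)product}(1)). So every subspace of a product of copies of $\R$ lies in $\cDV$ and satisfies (1). Conversely, given $V\in\cDV$ satisfying (1), I would consider the linear map $\iota\colon V\to\prod_{\ell\in L^{\infty}(V,\R)}\R$ sending $v$ to $(\ell(v))_{\ell}$. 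It is injective since $V\in\cSV$ and smooth since each of its coordinates is smooth. The one point that genuinely uses $\cDV$ is that $\iota$ is an \emph{induction}, not merely a smooth injection: a function $q\colon U\to V$ has $\iota\circ q$ a plot of the product precisely when $\ell\circ q$ is smooth for all $\ell\in L^{\infty}(V,\R)$, which by the definition of $\cDV$ is precisely the condition that $q$ be a plot of $V$. Hence $V$ is diffeomorphic to the subspace $\iota(V)$ of a product of copies of $\R$.

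I do not expect a real obstacle: the proposition is a recombination of results already in hand, organized around the single theme that ``non-zero indiscrete subspace'' is the common obstruction. The two spots that need a little care are the implication ruling out (4), where one should produce a concrete plot of $V$ witnessing non-Hausdorffness rather than appeal to a compatibility of the $D$-topology with subspaces that fails in general, and the verification that the map $\iota$ above is an induction --- both dispatched by the explicit descriptions indicated.
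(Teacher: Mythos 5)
Your proof is correct, and its skeleton coincides with the paper's: the forward implications come from known inclusions, $(5)\Rightarrow(1)$ comes from the splitting $V\cong W_0\oplus W_1$ of Proposition~\ref{prop:dv=>indiscrete+sv}, and the ``moreover'' clause is handled exactly as in the paper (closure of $\cSV$ and $\cDV$ under products and subspaces in one direction, and the evaluation map $v\mapsto(\ell(v))_\ell$ being a linear induction --- precisely because $V\in\cDV$ --- in the other). The one genuine difference is in the routing around condition~(2). The paper gets $(2)\Rightarrow(3)$ by invoking Theorem~\ref{thm:SD=>FDF} ($\cSD\subseteq\cF$), the hardest result of the paper, deferred to Section~\ref{se:SD=>FDF}, and then observes that $(3)\Rightarrow(5)$ and $(4)\Rightarrow(5)$ are easy. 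You instead prove $(2)\Rightarrow(5)$ directly --- every smooth functional is constant on an indiscrete subspace, since precomposing with the $\Q$/irrational-valued plot would otherwise produce a discontinuous map $\R\to\R$ --- and obtain $(1)\Rightarrow(3)$ from the much easier Theorem~\ref{thm:slfsp=>fdf} ($\cSV\subseteq\cF$). This makes the equivalence entirely self-contained and independent of the deferred theorem, at the cost of a slightly longer case analysis; the paper's version is shorter on the page but leans on heavier machinery. Your explicit verifications (the indiscrete sub-diffeology of a line is not fine; the preimage of a separating $D$-open set under the $\Q$-valued plot is $\Q$, hence not open) are all sound.
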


\begin{proof}
 Without any assumption on $V$, we have (1) $\implies$ (2) $\implies$ (3) and (2) $\implies$ (4)
 using Theorem~\ref{thm:SD=>FDF} and Proposition~\ref{prop:SD=>H}.
 It is easy to see that (3) $\implies$ (5) and (4) $\implies$ (5).
 By Proposition~\ref{prop:dv=>indiscrete+sv}, (5) $\implies$ (1) when $V$ is in $\cDV$,
 and so we have shown that the five conditions are equivalent for $V \in \cDV$.

 For the last claim, a product of copies of $\R$ is in both $\cSV$ and $\cDV$,
 and both are closed under taking subspaces.
 Conversely, if $V$ is in $\cSV \cap \cDV$, it is easy to check that 
 \[
  V \to \prod_{L^\infty(V,\R)} \R
 \]
 defined by $v \mapsto (f(v))_{f \in L^\infty(V,\R)}$ is a linear induction, and hence $V$ is a 
 subspace of a product of copies of $\R$.
\end{proof}

\begin{rem}\
 \begin{enumerate}
  \item It is not true that every diffeological vector space is in $\cDV$.
        For example, when $\R$ is equipped with the continuous diffeology (see Example~\ref{ex:H=/=>SD}),
        all smooth linear functionals are zero, but the diffeology is not indiscrete.
  \item Other properties we have studied cannot be added to Proposition~\ref{prop:DV=>equiv}.
        For example, we saw in Remark~\ref{rem:SV>P}(1) that $\prod_{\omega} \R$ is in $\cSV$ but is
        not fine or projective.
        And it is easy to see that $\prod_{\omega} \R$ is in $\cDV$.
 \end{enumerate}
\end{rem}

 It is not hard to show that every fine diffeological vector space is in $\cDV$.
 As a final example, we will show that not every projective diffeological 
 vector space is in $\cDV$, and therefore that none of our other conditions
 on a diffeological vector space $V$ implies that $V$ is in $\cDV$.

 We will again use the diffeological vector space $F(\R)$, which is
 projective by~\cite[Corollary~6.4]{Wu}.
 We now show that it is not in $\cDV$.

\begin{prop}\label{prop:d-vs-slf}
 The free diffeological vector space $F(\R)$ generated by $\R$ is not in $\cDV$.
\end{prop}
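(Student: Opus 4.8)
The plan is to exhibit an explicit function $p \colon \R \to F(\R)$ which is \emph{not} a plot of $F(\R)$, yet for which $\ell \circ p$ is smooth for every smooth linear functional $\ell \colon F(\R) \to \R$; by Definition~\ref{de:DV} this gives $F(\R) \notin \cDV$. To handle the functional condition, I would first record that, by the universal property of $F(\R)$, the smooth linear functionals on $F(\R)$ are exactly the maps $\tilde\phi$ coming from $\phi \in C^\infty(\R,\R)$, determined by $\tilde\phi([r]) = \phi(r)$; explicitly $\tilde\phi\bigl(\sum_i r_i[x_i]\bigr) = \sum_i r_i\,\phi(x_i)$. Thus it suffices to check that $\tilde\phi \circ p$ is smooth for every $\phi \in C^\infty(\R,\R)$.

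I would take $p(t) = c(t)\,[\sin(1/t)]$ for $t > 0$ and $p(t) = 0$ for $t \le 0$, where $c$ is the standard smooth function with $c(t) = e^{-1/t}$ for $t > 0$ and $c(t) = 0$ for $t \le 0$. Then $(\tilde\phi \circ p)(t) = c(t)\,\phi(\sin(1/t))$ for $t > 0$ and $0$ for $t \le 0$. The point of the flat factor $c$ is to absorb the blow-up of derivatives: since $\sin$ takes values in the compact interval $[-1,1]$, each $\phi^{(j)}(\sin(1/t))$ stays bounded as $t \to 0^+$, while the derivatives of $t \mapsto \sin(1/t)$ grow only polynomially in $1/t$; so a routine chain-rule and Leibniz estimate shows that every derivative of $\tilde\phi \circ p$ on $(0,\infty)$ is $O\bigl(t^{-N}e^{-1/t}\bigr)$ for a suitable $N$ depending on the order, hence tends to $0$ as $t \to 0^+$. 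As $\tilde\phi \circ p$ vanishes on $(-\infty,0]$, a standard criterion then gives that $\tilde\phi \circ p$ is smooth on all of $\R$, for every $\phi$.

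It remains to see that $p$ is not a plot. If it were, then by the description of plots of $F(\R)$ in \cite[Proposition~3.5]{Wu} there would be an open neighbourhood $U'$ of $0$ and smooth functions $r_1, \dots, r_k, g_1, \dots, g_k \colon U' \to \R$ with $p(t) = \sum_{i=1}^k r_i(t)\,[g_i(t)]$ on $U'$. For $t \in U' \cap (0,\infty)$ we have $c(t) \ne 0$, so comparing the coefficient of the basis vector $[\sin(1/t)]$ forces $\sin(1/t) \in \{g_1(t),\dots,g_k(t)\}$ for all such $t$. But for each $v \in [-1,1]$ there is a sequence $t_n \to 0^+$ in $U'$ with $\sin(1/t_n) = v$, so some $g_i$ attains the value $v$ at each $t_n$; by the pigeonhole principle a single index $i(v)$ does so along a subsequence, and continuity forces $g_{i(v)}(0) = v$. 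Distinct values of $v$ then require distinct indices, which is impossible since $[-1,1]$ is infinite while there are only $k$ indices. Hence $p$ is not a plot, and $F(\R) \notin \cDV$.

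The one genuinely delicate step is the derivative estimate in the second paragraph: one has to make precise that the wild oscillation of $\sin(1/t)$ near $0$ contributes only polynomial growth in $1/t$ to all derivatives --- thanks to the boundedness of $\sin$ together with the iterated reciprocal chain rule --- so that it is dominated by the exponentially flat $c$. The identification of $L^\infty(F(\R),\R)$ with $C^\infty(\R,\R)$ and the pigeonhole argument are routine.
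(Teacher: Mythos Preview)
Your argument is correct and gives a genuinely different construction from the paper's. The paper builds $g:\R\to F(\R)$ piecewise on the intervals $[\tfrac{1}{n+1},\tfrac{1}{n})$ by
\[
  g(t)=2^{-n}\,\phi_n(t)\sum_{i=1}^n\bigl[\tfrac{1}{i}\bigr],
\]
where $\phi_n$ is a rescaled bump supported in that interval. The ``not a plot'' part is then immediate because $g$ hits infinitely many basis vectors $[\tfrac{1}{i}]$ near $0$, so no local finite expression $\sum f_j[h_j]$ is possible; and for any $\ell=\tilde\phi$ the composite $\ell\circ g$ involves only the numbers $\phi(\tfrac{1}{i})$, which are bounded since $\phi$ is continuous on $\{0\}\cup\{1/i\}$, so the geometric factor $2^{-n}$ kills all derivatives. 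By contrast, your example $p(t)=e^{-1/t}\,[\sin(1/t)]$ uses a \emph{single} term with an oscillating inner function; your ``not a plot'' argument (pigeonhole on the values $g_i(0)$) is clean and avoids building a sequence of bumps, while the smoothness check trades the paper's simple boundedness observation for the standard estimate that $e^{-1/t}$ flattens polynomial blow-up in $1/t$. Both routes are short; yours is perhaps more self-contained, the paper's makes the obstruction (unbounded number of basis vectors) more visibly the same phenomenon as in Proposition~\ref{prop:cdpdvs}.
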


\begin{proof}
 Fix a non-zero smooth function $\phi:\R \to \R$ such that $\supp(\phi) \subset (0,1)$ and 
 $|\phi(x)| \leq 1$ for all $x \in \R$.
 For each $n \in \Z^+$, define $\phi_n:\R \to \R$ by 
 \[
   \phi_n(x)=\phi\left(\frac{x-\frac{1}{n+1}}{\frac{1}{n}-\frac{1}{n+1}}\right).
 \]
 Finally, define $g:\R \ra F(\R)$ by 
 \[
 g(t)=\begin{cases} 2^{-n} \, \phi_n(t) \, \sum_{i=1}^n [\frac{1}{i}], \;
 & \textrm{if $\frac{1}{n+1} \leq t < \frac{1}{n}$, for $n > 0$} \\ 
                        0,                                          & \textrm{else.} \end{cases}
 \]
 Then $g$ is not a plot of $F(\R)$, since locally around $0 \in \R$, 
 $g$ cannot be written as a finite sum of $f_i(x) [h_i(x)]$,
 where $f_i$ and $h_i$ are smooth functions with codomain $\R$.
 But for each $l \in L^\infty(F(\R),\R)$, 
 \[
 l \circ g(t) = \begin{cases} 2^{-n} \, \phi_n(t) \, \sum_{i=1}^n l([\frac{1}{i}]), \;
                                                               & \textrm{if $\frac{1}{n+1} \leq t <      \frac{1}{n}$} \\ 
                                 0,                            & \textrm{else.} \end{cases}
 \]
 This is smooth, since the set $\{ l([\frac{1}{i}]) \}$ is bounded, using the smoothness of $l$.
\end{proof}

As an easy corollary, we have:

\begin{cor}
 $F(\R)$ is not a subspace of a product of copies of $\R$.
\end{cor}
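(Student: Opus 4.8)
The plan is to deduce this immediately from Proposition~\ref{prop:d-vs-slf} together with the closure properties of $\cDV$. The key observation is that a product of copies of $\R$ lies in $\cDV$: indeed each factor $\R$ (with the standard, i.e.\ fine, diffeology) is in $\cDV$, and by part~(2) of the Lemma preceding Proposition~\ref{prop:dv=>indiscrete+sv}, an arbitrary product of spaces in $\cDV$ is again in $\cDV$. Moreover, by part~(1) of that same Lemma, $\cDV$ is closed under passage to subspaces.

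Putting these together, every subspace of a product of copies of $\R$ belongs to $\cDV$. (Alternatively, one can simply quote the final sentence of Proposition~\ref{prop:DV=>equiv}, which characterizes the diffeological vector spaces arising as subspaces of products of $\R$ as exactly those in $\cDV$ satisfying the equivalent conditions listed there; in particular such a space must be in $\cDV$.) But Proposition~\ref{prop:d-vs-slf} asserts precisely that $F(\R)$ is \emph{not} in $\cDV$. Hence $F(\R)$ cannot be (isomorphic to) a subspace of any product of copies of $\R$.

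There is no real obstacle here: the entire content was already extracted in Proposition~\ref{prop:d-vs-slf}, where the explicit plot $g : \R \to F(\R)$ was constructed that is detected by no smooth linear functional. The corollary is just a repackaging of that fact in terms of embeddings into products of $\R$, using only the elementary stability of $\cDV$ under products and subspaces.
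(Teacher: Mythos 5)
Your argument is correct and is exactly the route the paper intends: the paper states this as an immediate corollary of Proposition~\ref{prop:d-vs-slf}, relying on the fact that subspaces of products of copies of $\R$ lie in $\cDV$ (via the closure Lemma, or equivalently the last claim of Proposition~\ref{prop:DV=>equiv}). Nothing is missing.
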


\section{Some applications}\label{se:applications}

Recall that a diagram
\[
 \xymatrix{0 \ar[r] & V_1 \ar[r]^f & V_2 \ar[r]^g & V_3 \ar[r] & 0} 
\]
is a short exact sequence of diffeological vector spaces if it is a
short exact sequence of vector spaces such that $f$ is an induction and
$g$ is a subduction.
We say that the sequence \dfn{splits smoothly} if
there exists a smooth linear map $r:V_2 \to V_1$ such that $r \circ f = 1_{V_1}$, 
or equivalently, if there exists a smooth linear map $s:V_3 \to V_2$ such that 
$g \circ s = 1_{V_3}$.
In either case, $V_2$ is smoothly isomorphic to $V_1 \times V_3$.
(See~\cite[Theorem~3.16]{Wu}.)

Not every short exact sequence of diffeological vector spaces splits smoothly.
For example,
if we write $K$ for the subspace of $C^\infty(\R,\R)$ consisting of the smooth functions which 
are flat at $0$, then $K$ is not a smooth direct summand of $C^\infty(\R,\R)$~\cite[Example~4.3]{Wu}.

As a first application of the theory established so far, we can construct additional short exact 
sequences of diffeological vector spaces which do not split smoothly:

\begin{ex}
 Let $M$ be a manifold of positive dimension, and let $A$ be a finite subset of $M$. 
 Write $V$ for the subspace of $F(M)$ spanned by the subset $M \setminus A$ of $M$.
 We claim that $V$ is not a smooth direct summand of $F(M)$.

 To see this, write $W$ for the quotient diffeological vector space $F(M)/V$. 
 Then, as a vector space, $W = \oplus_{a \in A} \R$. So we have a short exact sequence 
 $0 \to V \to F(M) \to W \to 0$ in $\DVect$.
 Suppose this sequence splits smoothly.
 By Example~\ref{ex:projective}, $F(M)$ is projective, and therefore $W$ is as well.
 By Proposition~\ref{prop:PV=>SV} and Theorem~\ref{thm:slfsp=>fdf}, $W$ is in $\cF$.
 Since $W$ is finite-dimensional, it is fine.
 But the smooth map $M \to F(M) \to W = \oplus_{a \in A} \R$ 
 sends each $a \in A$ to a basis vector and other points in $M$ to $0$, so it is not
 a smooth map in the usual sense.  This contradicts the fact that $W$ is fine.
\end{ex}

As a second application, we prove:

\begin{thm}
 Let $V$ be in $\cSV$.
 Then every finite-dimensional subspace of $V$ is a smooth direct summand.
\end{thm}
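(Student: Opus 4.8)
The plan is to construct a smooth linear retraction $r \colon V \to W$ onto the given finite-dimensional subspace $W$, and then apply the splitting criterion recalled at the beginning of this section.

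First I would reduce the statement to an extension problem for linear functionals. Fix a basis $e_1, \dots, e_n$ of $W$ with dual basis $e_1^*, \dots, e_n^* \in L(W,\R)$. Suppose each $e_i^*$ admits an extension to a smooth linear functional $\ell_i \colon V \to \R$. Then define $r \colon V \to W$ by $r(v) = \sum_{i=1}^n \ell_i(v)\, e_i$. This $r$ is smooth: it is the composite of the smooth map $V \to \R^n$, $v \mapsto (\ell_1(v), \dots, \ell_n(v))$, with the map $\R^n \to W$, $(t_1, \dots, t_n) \mapsto \sum_i t_i e_i$, which is smooth because $(t_i) \mapsto \sum_i t_i e_i \in V$ is a composite of additions and scalar multiplications and the inclusion $W \hookrightarrow V$ is an induction. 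Since $\ell_i|_W = e_i^*$, we get $r|_W = \mathrm{id}_W$. Now $0 \to W \to V \to V/W \to 0$ is a short exact sequence of diffeological vector spaces (the inclusion is an induction by our conventions, the quotient map is a subduction), and the existence of $r$ shows it splits smoothly; hence $V \cong W \times (V/W)$ and $W$ is a smooth direct summand.

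So the crux is to extend every linear functional on $W$ to a smooth linear functional on $V$. Consider the restriction map $\rho \colon L^{\infty}(V,\R) \to L(W,\R)$, $\ell \mapsto \ell|_W$, and let $S := \im(\rho)$, a linear subspace of the $n$-dimensional space $L(W,\R) = W^*$. If $S \neq W^*$, then its annihilator $\{\, w \in W : s(w) = 0 \text{ for all } s \in S \,\}$ is a nonzero subspace of $W$, so it contains a nonzero vector $w$; but then $\ell(w) = \rho(\ell)(w) = 0$ for every $\ell \in L^{\infty}(V,\R)$, contradicting $V \in \cSV$, which provides a smooth linear functional separating $w$ from $0$. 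Hence $\rho$ is onto, which is exactly the required extension property. (By Proposition~\ref{prop:S-sub-(co)product}(1) and Theorem~\ref{thm:fd:fine<=>SV} the subspace $W$ is in fact fine, so $L(W,\R) = L^{\infty}(W,\R)$; this is not needed here but records that the summand is itself well behaved.)

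The only genuinely essential ingredient beyond routine bookkeeping is the finite-dimensionality of $W$, used in the dimension count that forces $S = W^*$: for infinite-dimensional $W$ a proper subspace $S \subsetneq W^*$ need not have a nonzero annihilator. Everything else — smoothness of $r$ as a map into $W$, that the inclusion is an induction and the quotient map a subduction, and that a retraction splits the sequence — is immediate from the conventions and the results already established, so I anticipate no real obstacle.
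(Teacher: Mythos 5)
Your proof is correct, and it takes a more direct route than the paper's. The paper first invokes Theorem~\ref{thm:slfsp=>fdf} to see that $W$ is fine, embeds $V$ into $\prod_{i\in I}\R$ via its smooth linear functionals, checks that the composite $W \hookrightarrow V \to \prod_{i\in I}\R$ is still an induction (which again uses fineness), reduces to the case $V = \prod_{i\in I}\R$, and then uses Gaussian elimination to find $m$ coordinate projections restricting to an isomorphism on $W$. You instead work intrinsically: the annihilator argument shows the restriction map $L^\infty(V,\R) \to L(W,\R)$ is surjective (this is where $\cSV$ and finite-dimensionality enter, exactly as you say), you lift a dual basis, and you assemble the retraction $r(v)=\sum_i \ell_i(v)e_i$ by hand. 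The underlying mechanism is the same in both proofs --- produce smooth linear functionals on $V$ whose restrictions to $W$ form a basis of $W^*$ --- but your version avoids the reduction to a product of copies of $\R$, does not need to verify that any composite is an induction, and, as you note, never needs the fact that $W$ is fine. What the paper's detour buys is a reusable picture (every space in $\cSV$ sits inside a power of $\R$, and coordinate projections do the work), which it exploits elsewhere; what yours buys is brevity and fewer prerequisites. All the individual steps you give (smoothness of $r$ as a map into $W$ with the sub-diffeology, the short exact sequence $0\to W\to V\to V/W\to 0$, and the splitting criterion from \cite[Theorem~3.16]{Wu}) check out.
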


\begin{proof}
 Let $W$ be a finite-dimensional subspace of $V \in \cSV$. By Theorem~\ref{thm:slfsp=>fdf}, 
 we know that $W$ has the fine diffeology. Moreover, since $V$ is in $\cSV$, there is a smooth 
 linear injective map $V \to \prod_{i \in I} \R$ for some index set $I$. 
 Since $\prod_{i \in I} \R$ is in $\cSV$, again by Theorem~\ref{thm:slfsp=>fdf}, we know 
 that the composite $W \hookrightarrow V \to \prod_{i \in I} \R$ is an induction, although 
 the second map might not be an induction. So, we are left to prove this statement for the 
 case $V=\prod_{i \in I} \R$. 

 Write $\dim(W)=m$. By Gaussian elimination, there exist distinct $i_1,\ldots,i_m \in I$ 
 such that the composite $W \hookrightarrow V=\prod_{i \in I} \R \to \R^m$ is an isomorphism of 
 vector spaces, where the second map is the projection onto the $i_1,\ldots,i_m$
 coordinates, and hence smooth. Since both $W$ and $\R^m$ have the fine diffeology, this
 isomorphism is a diffeomorphism, and by composing with its inverse we obtain
 a smooth linear map $r:V \to W$ such that the composite 
 \[
  \xymatrix{W\ \ar@{^{(}->}[r] & V \ar[r]^r & W}
 \]
 is $1_W$. Therefore, $W$ is a smooth direct summand of $V$.
\end{proof}

\pagebreak[1]
\section{Proof of Theorem~\ref*{thm:SD=>FDF}}\label{se:SD=>FDF}

\theoremstyle{plain}
\newtheorem*{repeatthm}{Theorem~\ref*{thm:SD=>FDF}}
\begin{repeatthm}
 Every diffeological vector space in $\cSD$ is in $\cF$.
\end{repeatthm}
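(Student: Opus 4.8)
The plan is to reduce the statement to a single finite-dimensional subspace and then to promote a separating \emph{smooth} function to a separating smooth \emph{linear} functional, after which Theorem~\ref{thm:fd:fine<=>SV} finishes the job. So let $W$ be a finite-dimensional subspace of $V$. Since $V$ is in $\cSD$ and $W$ carries the sub-diffeology, the restrictions to $W$ of the smooth functionals on $V$ already separate the points of $W$, so it suffices to prove that \emph{every finite-dimensional diffeological vector space $W$ whose smooth functionals separate points is fine}. Fixing a linear isomorphism $W \cong \R^m$, I would note that the diffeology of $W$, being a vector space diffeology, contains the fine diffeology on $W$, which is the standard diffeology on $\R^m$; consequently every $f \in C^\infty(W,\R)$ is in particular an ordinary smooth function $\R^m \to \R$ and has ordinary partial derivatives. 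Write $Df_a \colon \R^m \to \R$ for the differential of $f$ at $a \in W$.

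The crux is the following lemma: for every $f \in C^\infty(W,\R)$ and every $a \in W$, the linear functional $Df_a$ lies in $L^\infty(W,\R)$. Because translations are diffeomorphisms of $W$, it is enough to treat $a = 0$. Here I would use the smoothness of scalar multiplication $\mu \colon \R \times W \to W$: the composite $f \circ \mu \colon \R \times W \to \R$, $(s,x) \mapsto f(sx)$, is smooth, so for any plot $q \colon U \to W$ the function $G(s,u) := f(s\,q(u))$ is an ordinary smooth function on $\R \times U$. Differentiating $G$ in $s$ at $s = 0$ and applying the chain rule identifies $u \mapsto Df_0(q(u))$ with $(\partial_s G)(0,u)$, which is smooth; since this holds for every plot $q$ of $W$, the linear map $Df_0$ is smooth, i.e.\ lies in $L^\infty(W,\R)$.

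Granting the lemma, suppose $x \neq y$ in $W$ are not separated by $L^\infty(W,\R)$, so that $v := x - y$ lies in $W_0 := \bigcap_{\ell \in L^\infty(W,\R)} \ker \ell$. For any $f \in C^\infty(W,\R)$, the curve $t \mapsto f(y + tv)$ has derivative $Df_{y+tv}(v)$, which vanishes by the lemma and the choice of $v$; hence $f(x) = f(y)$. Thus the smooth functionals separate the points of $W$ only if the smooth linear functionals do, so $W$ is in $\cSV$, and then $W$ is fine by Theorem~\ref{thm:fd:fine<=>SV}. As $W$ was an arbitrary finite-dimensional subspace of $V$, this proves $V \in \cF$.

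The step I expect to require the most care is the lemma — precisely the claim that $Df_0 \circ q$ is smooth for \emph{every} plot $q$ of $W$, which hinges on the smoothness of scalar multiplication together with the (routine but essential) fact that partial derivatives of a smooth function of several real variables are again smooth. The remaining ingredients — that the sub-diffeology on $W$ is a vector space diffeology refining the standard one, and that translations act by diffeomorphisms — are straightforward.
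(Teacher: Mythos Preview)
Your argument is correct and is genuinely different from the paper's. The paper proceeds by contradiction: it takes a finite-dimensional $V=\R^n$ with a non-fine vector space diffeology, extracts a plot $p:\R\to V$ that is not smooth in the standard sense, and then splits into cases according to whether $p$ is continuous at $0$, bounded after multiplication by a power of $t$, etc.; Case~1 uses the inverse function theorem to show the set of gradients $\{\nabla f(x)\}$ is a proper subspace, while Case~2 requires an ``extended special curve lemma'' to manufacture a bounded plot from an unbounded one. Your route bypasses all of this. Your key lemma---that $Df_a\in L^\infty(W,\R)$ for every $f\in C^\infty(W,\R)$ and every $a$---is established cleanly from the smoothness of scalar multiplication, with no reference to any particular bad plot and no case analysis. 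Once that lemma is in hand, the implication $\cSD\Rightarrow\cSV$ for finite-dimensional $W$ is a one-line integration along a line in the common kernel, and Theorem~\ref{thm:fd:fine<=>SV} closes the loop. In effect, your lemma shows directly that the span of all gradients of smooth functions is contained in $L^\infty(W,\R)$, which is exactly the conclusion the paper's Case~1 works toward via the inverse function theorem; your argument then makes Case~2 unnecessary. The payoff is a shorter, more conceptual proof that avoids both the inverse function theorem and the curve-lemma machinery; the paper's approach, on the other hand, yields finer analytic information (e.g.\ the extended special curve lemma) that may be of independent interest.
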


\begin{proof}
If a diffeological vector space is in $\cSD$, then so are all of its subspaces.
So it suffices to show that every finite-dimensional diffeological vector
space in $\cSD$ is fine.

Write $V$ for $\R^n$ with the structure of a diffeological vector space
which is not fine.
We will use the word ``smooth'' (resp.\ ``continuous'') to describe functions $\R \to V$ and
$V \to \R$ which are smooth (resp.\ continuous) with respect to the usual diffeology (resp.\ topology) on $\R^n$.
We use the word ``plot'' to describe functions $\R \to V$ which
are in the diffeology on $V$, and write $f \in C^{\infty}(V, \R)$
to describe functions which are smooth with respect to this diffeology.

By Proposition~\ref{prop:fineness-on-curves}, there is
a plot $p : \R \to V$ which is not smooth.
Since plots are closed under translation
in the domain and codomain, we can assume without loss of generality
that $p(0)=0$ and $p$ is not smooth at $0 \in \R$.
We will show that this implies that $V$ is not in $\cSD$.

\smallskip
\noindent
\textbf{Case 1:} Suppose that $p$ is continuous at $0$.  
Consider $A:=\{\nabla f(x) \mid f \in C^\infty(V,\R), x \in V\}$.
Then $A$ is a subset of $\R^n$. 

We claim that $A$ is a proper subset of $\R^n$.
If $A$ is not proper, then there exist 
$(f_1,a_1),\ldots,(f_n,a_n) \in C^\infty(V,\R) \times V$ such that
$\nabla f_1(a_1),\ldots,\nabla f_n(a_n)$ are linearly independent.
Then $g_i:V \to \R$ defined by $g_i(x)=f_i(x+a_i)$
is in $C^\infty(V,\R)$, $G:=(g_1,\ldots,g_n):V \to \R^n$ is smooth, 
and the Jacobian $JG(0)$ is invertible.
Therefore, by the inverse function theorem, $G$ is a local diffeomorphism near $0 \in V$.
Since $p(0)=0$, $p$ is continuous at $0 \in \R$, and $G \circ p$ is smooth,
it follows that $p$ is smooth at $0$, contradicting our assumption on $p$.
So $A$ is a proper subset.

By the same method of translation, one sees that $A$ is a subspace of $\R^n$.
Hence, there exists $0 \neq v \in \R^n$ such that $v \perp A$, which implies that $f(x+tv)=f(x)$ 
for every $f \in C^\infty(V,\R)$, $x \in V$ and $t \in \R$, i.e., 
$V$ is not in $\cSD$.

\smallskip
\noindent
\textbf{Case 2:} Suppose that $p$ is not continuous at $0$. 

\smallskip
\noindent
\textbf{Case 2a:}  Suppose there exist $k \in \N$ and $\eps > 0$
such that $t^k p(t)$ is bounded on $[-\eps,\eps]$.
Let $k$ be the smallest such exponent and
write $q(t) := t^k p(t)$, which is also a plot.
We claim that $q$ is not smooth at $0$.
If $k = 0$, then $q = p$, which is assumed to not be smooth at $0$.
If $k > 0$ and $q'(0)$ exists, then $q(t)/t \to q'(0)$ as $t \to 0$,
which implies that $t^{k-1} p(t)$ is also bounded on $[-\eps,\eps]$,
contradicting the minimality of $k$.
So $q$ is not smooth at $0$.

If $q$ is continuous at $0$, then by Case 1, we are done.

So assume that $q$ is not continuous at $0$.
Then, since $q$ is bounded on $[-\eps,\eps]$, there exists a sequence
$t_i$ converging to $0$ such that $q(t_i)$ converges to a non-zero $v \in V$.
If $f$ is in $C^{\infty}(V, \R)$, then $f \circ q$ is smooth, 
so $f(0) = f(q(0)) = f(q(\lim t_i)) = \lim f(q(t_i)) = f(\lim\, q(t_i)) = f(v)$.
Therefore, the functions in $C^{\infty}(V, \R)$ do not separate points.

\smallskip
\noindent
\textbf{Case 2b:}  Suppose that Case 2a does not apply.
Then for each $k \in \N$, $\eps > 0$ and $M > 0$, 
there exists $t \in [-\eps,\eps]$ such that $\|t^k \, p(t)\| > M$.
(Note that $t \neq 0$, since $p(0) = 0$.)
Using this for $k = 0$, choose $t_1 \in [-1, 1]$ such that $\|p(t_1)\| > 1$.
Then, for each integer $k > 0$, choose $t_k$ with $|t_k| \leq |t_{k-1}|/2$
such that $\|t_k^k \, p(t_k)\| > k$.
If $m \leq k$, then $t_k$ also satisfies $\|t_k^m \, p(t_k)\| > k \geq m$,
since $|t_k| \leq 1$.
Therefore, we can restrict to a subsequence of the $t_k$ all having the same sign.
To fix notation, assume that each $t_k$ is positive.
Then, for $m \leq k$,
\[
  \frac{\,\,\frac{1}{\|p(t_k)\|}\,\,}{t_k^m} < \frac{1}{k}
\]
and so, for each $m$, the left-hand-side goes to $0$ as $k \to \infty$.
By Lemma~\ref{le:escl} below, there is a smooth curve
$c : \R \to \R$ such that $c(t_k) = 1/\|p(t_k)\|$.
It follows that $q(t) := c(t) p(t)$ is a plot such that $q(0) = 0$
and $\|q(t_k)\| = 1$ for each $k$.
Therefore, there is a subsequence converging to a non-zero $v \in V$,
and the argument at the end of Case 2a shows that 
$C^{\infty}(V, \R)$ does not separate points.
\end{proof}

\begin{lem}[Extended special curve lemma]\label{le:escl}
Let $\{ x_k \}$ and $\{ t_k \}$ be sequences in $\R$ such that 
$0 < t_k < t_{k-1}/2$ for each $k$ and $x_k/t_k^m \to 0$ as $k \to \infty$ for each $m \in \Z^+$.
Then there is a smooth function $c : \R \to \R$ such that $c(t_k) = x_k$ for each $k$
and $c(t) = 0$ for $t < 0$.
\end{lem}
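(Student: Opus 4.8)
The plan is to construct $c$ directly as a locally finite sum of rescaled bump functions, one concentrated near each $t_k$, and then verify flatness at the origin by hand. First I would fix a smooth $\phi:\R\to\R$ with $\supp(\phi)\subseteq(-1,1)$ and $\phi(0)=1$, set $\delta_k:=t_k/4$, and consider the intervals $I_k:=[t_k-\delta_k,\,t_k+\delta_k]=[\tfrac34 t_k,\tfrac54 t_k]$. Using only the hypothesis $0<t_k<t_{k-1}/2$, I would check that the $I_k$ lie in $(0,\infty)$, are pairwise disjoint, and accumulate only at $0$; indeed $\tfrac54 t_{k+1}<\tfrac58 t_k<\tfrac34 t_k$ and $\tfrac54 t_k<\tfrac58 t_{k-1}<t_{k-1}$, while $t_k\to 0$. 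Then I would define
\[
  c(t):=\sum_{k} x_k\,\phi\!\left(\frac{t-t_k}{\delta_k}\right)\ \text{ for } t>0,\qquad c(t):=0\ \text{ for } t\leq 0.
\]
Since every compact subset of $(0,\infty)$ meets only finitely many $I_k$, this sum is locally finite there, so $c$ is automatically smooth on $\R\setminus\{0\}$; moreover $c(t_k)=x_k\phi(0)=x_k$ because the $I_k$ are disjoint with $t_k$ interior to $I_k$, and $c\equiv 0$ on $(-\infty,0]$ by construction.

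The main obstacle is showing that $c$ is smooth at $0$ with all derivatives there equal to $0$, and this is where the growth hypothesis enters. The key estimate is that for $t\in I_k$ and any $j\geq 0$,
\[
  \bigl|c^{(j)}(t)\bigr|=|x_k|\,\delta_k^{-j}\,\bigl|\phi^{(j)}\bigl((t-t_k)/\delta_k\bigr)\bigr|\leq 4^{j}\,\|\phi^{(j)}\|_\infty\cdot\frac{|x_k|}{t_k^{\,j}},
\]
while $c^{(j)}$ vanishes identically off $\bigcup_k I_k$ (and on $(-\infty,0)$). For $j\geq 1$ the hypothesis $x_k/t_k^{\,j}\to 0$ gives $\sup_{I_k}|c^{(j)}|\to 0$, hence $c^{(j)}(t)\to 0$ as $t\to 0$; for $j=0$, $|x_k|=|t_k|\cdot|x_k/t_k|\to 0$, so $c$ is continuous at $0$ with $c(0)=0$. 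I would then finish with a routine induction on $j$: assuming $c$ is $C^{j}$ near $0$ with $c^{(i)}(0)=0$ for $i\leq j$, the mean value theorem yields $c^{(j)}(h)/h=c^{(j+1)}(\xi_h)$ for some $\xi_h$ between $0$ and $h$, so $c^{(j+1)}(0)=\lim_{h\to 0}c^{(j)}(h)/h=0$, and $c^{(j+1)}$ is continuous at $0$ because $c^{(j+1)}(t)\to 0$ as $t\to 0$.

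Everything except the flatness at $0$ is immediate from the construction: disjointness of the $I_k$, local finiteness and smoothness away from $0$, and the interpolation property $c(t_k)=x_k$. The essential mechanism is simply that rescaling the $k$-th bump by $\delta_k\asymp t_k$ makes its $j$-th derivative of size $O(|x_k|/t_k^{\,j})$, which matches the hypothesis $x_k/t_k^m\to 0$ exactly; this is a mild variant of the Special Curve Lemma of Kriegl--Michor~\cite[I.2.8]{KM}, and one could alternatively deduce it from that statement, but the direct construction above is self-contained.
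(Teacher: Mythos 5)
Your proof is correct, and it takes a genuinely different route from the one in the paper. The paper uses a single monotone transition function $\phi$ (with $\phi=0$ for $t\le 0$ and $\phi=1$ for $t\ge 1$) and defines $c$ piecewise on $[t_{k+1},t_k]$ so as to interpolate from $x_{k+1}$ to $x_k$; the derivative bound there involves $|x_k-x_{k+1}|/(t_k-t_{k+1})^r$ together with the gap estimate $t_k-t_{k+1}>t_k/2>t_{k+1}$, and the verification that $c^{(r)}(t)/t\to 0$ requires a little care because the interval $[t_{k+1},t_k]$ can be very lopsided when $t_{k+1}\ll t_k$ (one needs, e.g., that $t\ge s\,(t_k-t_{k+1})$ at the parameter $s$ where $\phi^{(r)}$ is evaluated, plus boundedness of $\phi^{(r)}(s)/s$). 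Your construction sidesteps this entirely: each bump is supported in $[\tfrac34 t_k,\tfrac54 t_k]$, so every point in the support of the $k$-th term is comparable to $t_k$, the scaling gives $\sup_{I_k}|c^{(j)}|\le 4^j\|\phi^{(j)}\|_\infty\,|x_k|/t_k^{\,j}$ directly, and your mean-value-theorem induction replaces the difference-quotient estimate altogether. The price is that your $c$ vanishes between consecutive nodes rather than interpolating, but the lemma only requires $c(t_k)=x_k$, so nothing is lost; both constructions generalize equally well (e.g., to vector-valued curves), and both are faithful variants of the Special Curve Lemma of~\cite[I.2.8]{KM}.
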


The proof closely follows~\cite[page~18]{KM}, and can easily be
generalized further.

\begin{proof}
Let $\phi : \R \to \R$ be a smooth function such that $\phi(t) = 0$ for $t \leq 0$
and $\phi(t) = 1$ for $t \geq 1$.  Define $c : \R \to \R$ by
\[
  c(t) = \begin{cases}
           0, & \text{for $t \leq 0$} \\
           x_{k+1} + \phi\left(\frac{t - t_{k+1}}{t_k - t_{k+1}}\right)(x_k - x_{k+1}), \; & \text{for $t_{k+1} \leq t \leq t_k$} \\
           x_1, & \text{for $t_1 \leq t$.}
         \end{cases}
\]
$c$ is smooth away from $0$ and for $t_{k+1} \leq t \leq t_k$ we have
\[
  c^{(r)}(t) = \phi^{(r)}\left(\frac{t - t_{k+1}}{t_k - t_{k+1}}\right) \frac{1}{(t_k - t_{k+1})^r}  (x_k - x_{k+1}) .
\]
Since $t_k - t_{k+1} > t_k/2 > t_{k+1}$, the right-hand-side goes to zero as $t \to 0$.
Similarly, $c^{(r)}(t)/t \to 0$, which shows that each $c^{(r+1)}(0)$ exists and is $0$.
So $c$ is smooth.
\end{proof}

We are indebted to Chengjie Yu for the argument used in Case 1 of Theorem~\ref{thm:SD=>FDF}.
After we completed Case 2, Yongjie Shi and Chengjie Yu proved this case in
more generality in~\cite{SY}.

\end{document}